\definecolor{refkey}{rgb}{0,0,1}
\definecolor{labelkey}{rgb}{0,0,1}
\newtheorem{theorem}{Theorem}
\newtheorem{corollary}[theorem]{Corollary}
\newtheorem{question}[theorem]{Question}
\newcommand{\Sym}[1]{\mathfrak{S}_{#1}}
\newcommand{\Orb}{\mathrm{Orb}}
\newcommand{\GSym}[1]{\mathbb{C}\mathfrak{S}_{#1}}
\newcommand{\ZSym}[1]{Z(\mathbb{C}\mathfrak{S}_{#1})}
\newcommand{\precj}{\prec_j}
\newcommand{\doublehur}[1]{md_g(#1)}
\newcommand{\hur}[1]{m_g(#1)}
\newcommand{\hurprec}[2]{m^{#2}_g(#1)}
\newcommand{\sethurprec}[2]{M^{#2}_g(#1)}
\newcommand{\sethur}[1]{M_g(#1)}
\newcommand{\setdoublehur}[1]{MD_g(#1)}
\newcommand{\upth}{\text{th}}
\newcommand{\sethurgprimeprec}[2]{M^{#2}_{g'}(#1)}
\newcommand{\sethurgprime}[1]{M_{g'}(#1)}
\newcommand{\K}{\mathcal{K}}
\newcommand{\myatop}[2]{\genfrac{}{}{0pt}{2}{#1}{#2}}
\def\smalloverbrace#1{\mathop{\vbox{\m@th\ialign{##\crcr\noalign{\kern3\p@}%
  \tiny\downbracefill\crcr\noalign{\kern3\p@\nointerlineskip}%
  $\hfil\displaystyle{#1}\hfil$\crcr}}}\limits}
 \def\@testdef #1#2#3{%
  \def\reserved@a{#3}\expandafter \ifx \csname #1@#2\endcsname
 \reserved@a  \else
 \typeout{^^Jlabel #2 changed:^^J%
\meaning\reserved@a^^J%
\expandafter\meaning\csname #1@#2\endcsname^^J}%
\@tempswatrue \fi}
\title{Centrality of star and monotone factorisations}
\author{Jesse Campion Loth}
	\address{School of Mathematics, University of Bristol and Heilbronn Institute for Mathematical Research, Bristol, UK}
	\email{j.campionloth@bris.ac.uk}
	\author{Amarpreet Rattan}
	\address{Simon Fraser University, Burnaby, BC, Canada}
	\email{rattan@sfu.ca}
\date{\today}
\def\@testdef #1#2#3{%
   \def\reserved@a{#3}\expandafter \ifx \csname #1@#2\endcsname
  \reserved@a  \else
 \typeout{^^Jlabel #2 changed:^^J%
 \meaning\reserved@a^^J%
 \expandafter\meaning\csname #1@#2\endcsname^^J}%
 \@tempswatrue \fi}
\begin{document}

\begin{abstract} A factorisation problem in the symmetric group is
	\emph{central} if conjugate permutations always have the same
	number of factorisations. We give the first fully combinatorial proof of
	the centrality of transitive star factorisations that is valid in all
	genera, which answers a natural question of Goulden and Jackson from
	2009.  We begin by showing that the set of star factorisations is
	equinumerous with a certain set of monotone factorisations, a new
	result. We give more than one proof of this, and, crucially, one of our
	proofs is bijective.  %
	As a corollary we obtain new formulae for some
	monotone double Hurwitz factorisations, and a new relation between Hurwitz and
	monotone Hurwitz factorisations.  We also generalise a theorem of Goulden and
	Jackson from 2009 that states that the transitive power of Jucys-Murphy
	elements are central.  Our theorem states that the transitive image of
	any symmetric function evaluated at Jucys-Murphy elements is central,
	which gives a transitive version of Jucys' original result from 1974.
\end{abstract}
\maketitle
\section{Introduction}\label{sec:intro}

Given a permutation $\gamma$, a factorisation of $\gamma$ is the writing of
$\gamma$ as the product of other permutations, subject to certain constraints.
Factorisation problems have received a lot of interest recently;  see the survey
of Goulden and Jackson \cite{goulsurvey}.  The central objects of the present manuscript
are monotone and star factorisations.  For both monotone and star
factorisations, the factors are restricted to the set of transpositions, but
both have additional constraints.  Both sets of factorisations
display a symmetry called \emph{centrality} | that conjugate elements have the same number of
factorisations | that is not apparent
from their definitions.  Our main results include new links between
these seemingly different factorisations, their centrality
properties and their connection with the Jucys-Murphy elements of the
symmetric groups.   Jucys-Murphy elements are famous for their properties when they are the arguments
of symmetric functions (Theorem \ref{thm:jucyscentral} below), and also for their role
in the representation theory of the symmetric group (see Okounkov and Vershik
\cite{ok:2}). We begin by establishing basic notation and recalling fundamental results in
Section \ref{sec:basics}.  The factorisations are then introduced 
in Section \ref{sec:factstrans} (monotone factorisations) and Section
\ref{sec:introstar} (star factorisations).  Our main results are stated in
Section \ref{sec:mainresults}.  Connections to geometry and consequences of our
main result are given in Section \ref{sec:connect}.  Proofs of the main results
appear in Sections \ref{sec:recurs} through \ref{sec:trans}.  Questions that arise from our main results and open problems are given in Section
\ref{sec:open}.

\subsection{Basics and notation}\label{sec:basics}
Let $n$ be a positive integer.   A \emph{partition} of $n$ is a weakly
decreasing list $\lambda = (\lambda_1, \lambda_2,
\ldots)$ of positive integers whose sum is $n$;  we denote this by $\lambda \vdash n$.  Each entry in
a partition is called a \emph{part}, and we denote by $\ell(\lambda)$ the number of parts.  
We use $\Sym{n}$ for the symmetric group on the set $[n] : =
\{1, 2, \ldots, n\}$ and $\GSym{n}$ for the group algebra of $\Sym{n}$,
which has centre denoted by $\ZSym{n}$.  Our convention is that we multiply
elements of $\Sym{n}$ from left to
right.
For a partition $\lambda \vdash n$, we let $C_\lambda$ be the conjugacy class of $\Sym{n}$
containing the permutations with cycle type $\lambda$.  Define the
\emph{conjugacy class sum} $K_\lambda \in \GSym{n}$ by
$$K_\lambda :=  \sum_{\sigma \in C_\lambda} \sigma.$$
It is well-known that each $K_\lambda$ is central in $\GSym{n}$ and that $\{K_\lambda : \lambda \vdash n\}$ is a linear basis for $\ZSym{n}$.  For $\sigma \in \Sym{n}$ and $g \in \GSym{n}$, we let $[\sigma] g$
be the coefficient of $\sigma$ in $g$.  Similarly, for any partition $\lambda \vdash n$ and any expression $K \in
\ZSym{n}$, we let $[K_\lambda] K$ be the coefficient of $K_\lambda$ in $K$.  We
write $c(\sigma)$ for the number of cycles in $\sigma$.

The \emph{Jucys-Murphy elements} are members of $\GSym{n}$ defined by $J_k = (1 \, k) + (2 \, k) +
\dots + (k-1 \, k)$ for $2 \leq k \leq n$, and $J_1 = 0$.  Their remarkable properties
were independently discovered by Jucys \cite{sympolys-jucys} and Murphy
\cite{murphy}.  For any $2 \leq i,j \leq n$, one can easily verify that $J_i
\notin \ZSym{n}$, but $J_i$ and $J_j$
commute.

We use notation consistent with Macdonald \cite{mac:1}.   We write symmetric functions in terms of 
indeterminates $x = (x_1, x_2, \ldots)$, and for a partition $\lambda \vdash k$, 
we write $h_\lambda(x)$, $e_\lambda(x)$ and $p_\lambda(x)$ for the
\emph{complete, elementary} and \emph{power sum} symmetric functions,
respectively.
We let $\Lambda^k$ be the symmetric functions of homogeneous degree $k$ and
$\Lambda := \oplus_{k \geq 0} \Lambda^k$ be the algebra of symmetric functions.

\sloppypar For $f \in \Lambda$, we let $f(\Upxi_n)$ be the evaluation defined by
$f(\Upxi_n) := f(J_1, \ldots, J_n, 0,
\ldots) = f(J_2, \ldots, J_n, 0, \ldots)$ (the latter equality is from $J_1 = 0$).
The following two fundamental results are due to Jucys \cite{sympolys-jucys} and
Murphy \cite{murphy, murphy1}.  
\begin{theorem}\label{thm:jm}
	For any integers $k,n \geq 1$,
	$$e_k(\Upxi_n) = \sum_{\myatop{\lambda \vdash n}{\ell(\lambda) = n-k}}
	K_\lambda.$$
	Whence $e_k(\Upxi_n)$ is central.
\end{theorem}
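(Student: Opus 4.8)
The plan is to expand the product $J_2 J_3 \cdots J_n$ — or rather, the formal generating function $\prod_{k=2}^n (1 + t J_k)$ — and identify the coefficient of $t^k$, which is precisely $e_k(\Upxi_n)$ by definition of elementary symmetric functions in the $J_i$. First I would observe that when one multiplies out $\prod_{k=2}^n (1 + t J_k) = \sum_{k} t^k e_k(\Upxi_n)$, the coefficient of $t^k$ is a sum over $k$-subsets $S = \{k_1 < k_2 < \cdots < k_k\} \subseteq \{2,\ldots,n\}$ of products $J_{k_1} J_{k_2} \cdots J_{k_k}$, taken in increasing order of index. Each such ordered product, once the definition $J_{k_i} = \sum_{a < k_i}(a\ k_i)$ is substituted, becomes a sum of products of transpositions $(a_1\ k_1)(a_2\ k_2)\cdots(a_k\ k_k)$ with $a_i < k_i$ and with the second coordinates strictly increasing. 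These are exactly monotone sequences of transpositions, and the key combinatorial fact to establish is that each such product is a permutation whose cycle type $\lambda$ satisfies $\ell(\lambda) = n - k$: since each transposition $(a_i\ k_i)$ introduces a new largest letter $k_i$ not seen before, a straightforward induction on $k$ shows the product moves exactly the letters $\{a_1,\ldots,a_k\} \cup \{k_1,\ldots,k_k\}$ in a way that joins them into $k$ fewer cycles than the identity on $n$ points — equivalently, the number of cycles drops by exactly one at each step.

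The second half of the argument is to show that the resulting element is precisely $K_\lambda$ with multiplicity one for each $\lambda$ with $\ell(\lambda) = n-k$, i.e., that each permutation of the relevant cycle type arises exactly once as such an ordered product. For the "at most once" direction I would argue that a monotone factorisation into transpositions with strictly increasing largest elements and with the product having $n-k$ cycles is uniquely reconstructible from the permutation: the transposition with the largest second coordinate $k_i = $ (largest non-fixed point, in the appropriate sense) is forced, peel it off, and induct. For the "at least once" direction I would show every permutation $\sigma$ with $c(\sigma) = n-k$ admits such a monotone transposition factorisation of length $k$; this is a classical fact (it underlies the theory of monotone Hurwitz numbers and the canonical / "sorted" transposition factorisation — write each cycle with its largest element last and factor it greedily). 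Since $e_k(\Upxi_n)$ is manifestly a sum of permutations with non-negative integer coefficients and we have shown each $\sigma$ with $c(\sigma) = n-k$ appears with coefficient exactly $1$ and no other permutation appears, we conclude $e_k(\Upxi_n) = \sum_{\ell(\lambda) = n-k} K_\lambda$, and centrality follows since each $K_\lambda$ is central.

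The main obstacle, I expect, is the bijective bookkeeping in the uniqueness/existence argument: one must be careful that "monotone in the second coordinate" together with "drops the cycle count by exactly one at each step" really does pin down the factorisation, handling the subtlety that a transposition $(a\ k_i)$ could in principle merge two cycles both already containing letters smaller than $k_i$ — the genus/cycle-count constraint is exactly what rules this out and forces $a$ to lie in the unique cycle not yet "activated." An alternative, cleaner route that sidesteps the explicit combinatorics is to prove centrality first and abstractly: show $e_k(\Upxi_n) \in \ZSym{n}$ by checking it is fixed under conjugation by $\Sym{n}$ (conjugation by the transposition $(n-1\ n)$ permutes the $J_i$ among themselves up to the relation $J_{i}\mapsto$ conjugate, and a symmetric-function argument of Jucys shows any $f(\Upxi_n)$ is central), and then determine the coefficients $[K_\lambda]e_k(\Upxi_n)$ by evaluating on a single convenient representative of each class $C_\lambda$, reducing everything to a counting statement for one permutation at a time. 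I would present the direct generating-function-expansion proof as the main line, since it simultaneously yields centrality and the explicit formula, and it is the version that generalises to the later results on $h_k(\Upxi_n)$ and arbitrary symmetric functions.
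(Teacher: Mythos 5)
Your proposal is correct, but there is nothing in the paper to compare it against directly: Theorem~\ref{thm:jm} is stated as a known result of Jucys and Murphy and is cited, not proved. Your argument is the standard direct proof, and it goes through. Expanding $e_k(\Upxi_n)$ gives precisely the strictly monotone products $(a_1\,i_1)\cdots(a_k\,i_k)$ with $a_t<i_t$ and $i_1<\cdots<i_k$, each with coefficient one; since each $i_t$ has not appeared earlier, it is a fixed point of the partial product, so every multiplication is a join and the product has exactly $n-k$ cycles. For the converse, your peeling argument is sound once you add one observation: because all steps are joins, the support of the product is the union of the supports of the factors, so the largest second coordinate must equal the largest non-fixed point $i$ of $\omega$, and its partner is then forced (it is $\omega(i)$ under the paper's left-to-right convention), lies in the same cycle, and is smaller than $i$; removing this transposition is a cut, strictly decreases the largest non-fixed point, and induction yields both uniqueness and existence of the length-$k$ strictly monotone factorisation. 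Hence every $\omega$ with $c(\omega)=n-k$ occurs with coefficient exactly $1$, nothing else occurs, and centrality follows since the $K_\lambda$ are central. Notably, your forward analysis is exactly the computation the paper carries out inside the proof of Theorem~\ref{thm:transsym} (the discussion around \eqref{eq:unique}), where the converse direction is instead deduced from Theorem~\ref{thm:jm}; your peeling step would make that passage self-contained.

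Two smaller points. The ``subtlety'' you worry about in the forward direction does not arise: strict monotonicity forces $i_t$ to be a fixed point of the partial product, so each merge involves the singleton $\{i_t\}$, and no appeal to a cycle-count constraint is needed there (it is only the uniqueness direction that uses $c(\omega)=n-k$, through the cut/join count). In your alternative route, be careful that conjugation by $(i\ i{+}1)$ does not literally permute the Jucys--Murphy elements (one has $s J_i s = J_{i+1}-s$ and $s J_{i+1} s = J_i+s$); the correct version of that aside is that $J_i+J_{i+1}$ and $J_iJ_{i+1}$ are invariant, whence every symmetric polynomial in $\Upxi_n$ is central --- and even then the coefficient identification still requires the counting argument of your main line.
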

The next theorem states that symmetric function evaluations of Jucys-Murphy
elements are precisely $\ZSym{n}$.  Since $\{e_k : k \geq 1\}$ generate
$\Lambda$ as an algebra, the first part of the next theorem is a corollary of Theorem \ref{thm:jm}.
\begin{theorem}  \label{thm:jucyscentral}
    Let $n$ be a positive integer.  For any symmetric function $f$, we have $f(\Upxi_n) \in
    \ZSym{n}$.  Conversely, if $K \in \ZSym{n}$, there exists a symmetric
	function $f$ such that $K = f(\Upxi_n)$.  
\end{theorem}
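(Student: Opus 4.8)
The plan is to prove the two assertions separately. For the forward direction, I would first note that $f(\Upxi_n) \in \ZSym{n}$ for \emph{all} symmetric functions $f$ follows immediately from Theorem \ref{thm:jm}: since $\{e_k : k \geq 1\}$ generate $\Lambda$ as a $\mathbb{C}$-algebra, any $f \in \Lambda$ is a polynomial in the $e_k$, and hence $f(\Upxi_n)$ is a polynomial in the commuting central elements $e_k(\Upxi_n)$. A product of central elements is central, and $\ZSym{n}$ is closed under $\mathbb{C}$-linear combinations, so $f(\Upxi_n) \in \ZSym{n}$. (One should be slightly careful that the substitution $f \mapsto f(\Upxi_n)$ is well-defined as a ring homomorphism $\Lambda \to \GSym{n}$: this is exactly the statement that the $J_k$ pairwise commute, which is recalled in the excerpt, so that the universal property of the polynomial ring $\mathbb{C}[e_1, e_2, \ldots] \cong \Lambda$ applies.)

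For the converse, the key point is a dimension/spanning argument. By Theorem \ref{thm:jm}, the element $e_k(\Upxi_n)$ equals the sum of all $K_\lambda$ with $\ell(\lambda) = n-k$, i.e. over conjugacy classes whose permutations are products of exactly $k$ transpositions (counted with the natural minimal-length grading $n - c(\sigma)$). I would consider the subspace $\mathcal{A}_n := \{f(\Upxi_n) : f \in \Lambda\} \subseteq \ZSym{n}$, which by the first part is contained in $\ZSym{n}$ and is in fact a subalgebra. The goal is to show $\mathcal{A}_n = \ZSym{n}$, and since $\{K_\lambda : \lambda \vdash n\}$ is a basis of $\ZSym{n}$ it suffices to show every $K_\lambda$ lies in $\mathcal{A}_n$. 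I would do this by induction on $n - \ell(\lambda)$ (equivalently on the minimal number of transpositions needed to write a permutation of cycle type $\lambda$). The base case $n - \ell(\lambda) = 0$ gives $\lambda = (1^n)$ and $K_{(1^n)} = \mathrm{id}$, which is $1 = e_0(\Upxi_n) \in \mathcal{A}_n$. For the inductive step, given $\lambda$ with $n - \ell(\lambda) = k$, Theorem \ref{thm:jm} gives $e_k(\Upxi_n) = K_\lambda + \sum_{\mu} K_\mu$ where every $\mu$ in the remaining sum also satisfies $\ell(\mu) = n-k$; but these other $K_\mu$ are \emph{not} of smaller grading, so a naive one-step induction does not close. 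Instead I would use products: the subalgebra generated by $e_1(\Upxi_n), \ldots, e_k(\Upxi_n)$ contains, for any partition $\nu = (\nu_1, \ldots, \nu_r)$ with $\sum \nu_i = k$, the element $e_\nu(\Upxi_n) = e_{\nu_1}(\Upxi_n) \cdots e_{\nu_r}(\Upxi_n)$, and the structure constants of $\ZSym{n}$ are such that $e_\nu(\Upxi_n) = \sum_\mu c^\nu_\mu K_\mu$ where $c^\nu_\mu$ counts factorisations of a fixed permutation of type $\mu$ into transpositions with the block structure prescribed by $\nu$, the dominant term (in the partial order refining the grading) being $\mu$ with $n-\ell(\mu) = k$ obtained by "multiplying out" all the transpositions into disjoint cycles. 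The transition matrix between $\{e_\nu(\Upxi_n)\}_{\nu \vdash \leq n,\ \text{after grouping by }|\nu|}$ and $\{K_\mu\}$ is then unitriangular with respect to an appropriate ordering, hence invertible, giving every $K_\mu \in \mathcal{A}_n$.

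The main obstacle is this triangularity argument for the converse: one must choose the right partial order on partitions of $n$ (refining $n - \ell(\lambda)$, roughly the order by number of transpositions, and within a fixed grade by refinement) and verify that $e_\nu(\Upxi_n)$, expanded in the $K_\mu$ basis, has a nonzero coefficient on a unique "leading" $K_\mu$ and zero coefficients on everything strictly larger. This reduces to a combinatorial fact about factorising permutations into transpositions: writing $J_{k_1} \cdots J_{k_m}$ and expanding, one picks one transposition from each $J_{k_i}$, and the generic (leading) term occurs when these transpositions multiply to a permutation of the maximal possible cycle type, with the coefficient being a positive count. I expect the cleanest route is to observe that the change of basis from the power-sum-like family $\{e_k(\Upxi_n)\}$ to $\{K_\lambda\}$ is governed by the same combinatorics as the classical transition matrices between symmetric function bases, so that invertibility is inherited; alternatively, one can invoke a known result of Jucys or Murphy directly if the excerpt permits citing it, but since the task is to give a self-contained plan I would carry out the unitriangularity explicitly as above.
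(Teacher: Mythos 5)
Your forward direction is exactly the paper's argument: the paper does not prove this theorem in full (it is cited to Jucys and Murphy), and the only justification it offers is the one-line remark that, since $\{e_k : k\ge 1\}$ generate $\Lambda$ as an algebra, the first part is a corollary of Theorem \ref{thm:jm}. That part of your proposal is fine, including the point about the $J_k$ commuting so that evaluation is a well-defined algebra map.

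The converse, however, has a genuine gap: the unitriangularity you rely on is false. For any $\nu \vdash k$ and any $\mu \vdash n$ with $n-\ell(\mu)=k$, the coefficient of $K_\mu$ in $e_\nu(\Upxi_n)$ is strictly \emph{positive}: a term of $e_{\nu_i}(\Upxi_n)$ is (by the strictly monotone description used in Section \ref{sec:trans}) an arbitrary permutation $\omega_i$ with $c(\omega_i)=n-\nu_i$, and one can always write a fixed $\omega\in C_\mu$ as an ordered product of such factors with reflection lengths adding exactly to $k$, by distributing the lengths $\nu_1,\dots,\nu_\ell$ over the cycles of $\omega$ (column sums $\mu_j-1$) and minimally factoring each cycle. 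Concretely, for $n=4$, $k=2$: $e_2(\Upxi_4)=K_{(3,1)}+K_{(2,2)}$ while $e_{(1,1)}(\Upxi_4)=6K_{(1^4)}+3K_{(3,1)}+2K_{(2,2)}$, so both top-grade classes occur with nonzero coefficient in both elements and no ordering of the grade-$2$ classes makes the transition triangular; there is no unique ``leading'' $K_\mu$. The matrix happens to be invertible in this example, but proving invertibility of these all-positive matrices (and handling the fact that $e_\nu(\Upxi_n)$ with $|\nu|>k$ also contributes to grade-$k$ classes, so the filtration does not cleanly decouple) is essentially the whole content of the converse, and your sketch does not supply it. The classical route (Jucys, Murphy) is different: $f(\Upxi_n)$ acts on the irreducible indexed by $\lambda\vdash n$ by the scalar obtained by evaluating $f$ at the multiset of contents of $\lambda$; distinct partitions have distinct content multisets, so symmetric functions separate the central characters, and by interpolation the map $\Lambda \to \ZSym{n} \cong \mathbb{C}^{p(n)}$ is surjective. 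If you want to keep a combinatorial flavour, you would need to replace your leading-term claim by an actual rank argument of this kind rather than triangularity.
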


\subsection{Monotone factorisations}  \label{sec:factstrans}

We focus on three well-known factorisation problems in $\Sym{n}$, two of
which we present in this section.  Throughout, when
a transposition $(a\, b)$ is written, we assume that $a < b$.  Let
$\omega \in \Sym{n}$ and $g$ be a nonnegative integer.   
A \emph{genus $g$ monotone double Hurwitz factorisation of
$\omega$} is a tuple $(\sigma, (a_1 \, b_1), (a_2 \, b_2), \ldots, (a_m \,
b_m))$ that satisfies $\sigma (a_1 \, b_1) (a_2 \, b_2) \cdots (a_m \, b_m) =
\omega$ and the following three conditions.
\begin{enumerate}
	\item[H0.] The permutation $\sigma$ is in $C_{(n)}$.
	\item[H1.]  The number of transpositions is $m =c(\omega) -1+2g$.
	\item[H2.]  The transpositions satisfy $b_1 \leq b_2 \leq \dots \leq
		b_m$ (\emph{i.e.} the transpositions are \emph{monotone}). 
\end{enumerate}
Let $\setdoublehur{\omega}$ be the set of such tuples and let
$\doublehur{\omega}$ be its cardinality.  For these factorisations, and other
factorisations below, we shall often omit tuple notation and simply write $\sigma (a_1 \,
b_1) (a_2 \, b_2) \cdots (a_m \, b_m) = \omega$ for a factorisation.

For the second problem, we are interested in tuples of transpositions $(a_1 \,
b_1) \allowbreak (a_2 \, b_2) \cdots (a_m \, b_m) = \omega$ satisfying the following two conditions.
\begin{enumerate}
	\item[H1'.] The number of transpositions is $m = n - c(\omega)  + 2g$.
	\item[H2'.] The transpositions satisfy H2.
\end{enumerate}
We let $\sethur{\omega}$ be the set of such tuples with
$\hur{\omega}$ its cardinality.\footnote{We give some more background to the
	sets $\setdoublehur{\omega}$ and $\sethur{\omega}$ and related problems at the end of Section
\ref{sec:introstar}.}  %
	Goulden, Guay-Paquet and Novak \cite{monotone_hurwitz_genus0} observed that 
\begin{equation}\label{eq:monotoneggpn}
	\hur{\omega} = [\omega] h_{n-c(\omega)+2g}(\Upxi_n).
\end{equation}
It follows from \eqref{eq:monotoneggpn} and Theorem \ref{thm:jm} that
\begin{equation}\label{eq:centsym}
	\doublehur{\omega} =[\omega] e_{n-1}(\Upxi_n) \cdot h_{c(\omega)-1+2g}(\Upxi_n)
				       = [\omega] J_2 \cdots J_n \cdot
				       h_{c(\omega)-1+2g}(\Upxi_n).
\end{equation}
From Theorem \ref{thm:jucyscentral}, the evaluations of symmetric functions 
on the right-hand sides of
\eqref{eq:monotoneggpn} and \eqref{eq:centsym} both lie in $\ZSym{n}$.   It
follows that if $\gamma$ is conjugate to $\omega$, so both lie in some conjugacy
class $C_\lambda$, then
\begin{equation*}
	\hur{\gamma}=\hur{\omega} = [\omega] h_{n - \ell(\lambda)+2g}(\Upxi_n),
\end{equation*} a
fact that is not obvious from the combinatorial description of the
factorisations.
We may therefore define $\hur{\lambda}:=
\hur{\omega}$ for the partition $\lambda$, and likewise define
$\doublehur{\lambda}$.  In light of this, 
we abuse terminology and say the enumerative functions $\hur{\cdot}$ and
$\doublehur{\cdot}$, or the associated factorisations, are
\emph{central}.\footnote{Equivalently, we have that $\hur{\cdot}$ and $\doublehur{\cdot}$ are class
functions on $\Sym{n}$.}

\subsection{Star factorisation and the transitivity operator}\label{sec:introstar}

We begin by recalling
the notion of a subset of $\Sym{n}$ acting transitively.  For a subset $\{\sigma_1, \dots, \sigma_t\} \subseteq \Sym{n}$,
let $\langle \sigma_1, \dots, \sigma_t \rangle$ be the subgroup of $\Sym{n}$
generated by the subset. 
Let $\Orb(\sigma_1, \dots, \sigma_t)$ be the set partition of $[n]$ describing
the orbits of the natural action on $[n]$ by $\langle \sigma_1, \ldots,
\sigma_t \rangle$.  That is, the elements $i,j \in [n]$
are in the same part of $\Orb(\sigma_1, \dots, \sigma_t)$ if and only if there
exists some $\sigma \in \langle \sigma_1, \dots, \sigma_t \rangle$ with
$\sigma(i) = j$.  We say $\{\sigma_1, \dots, \sigma_t\}$  \emph{acts
transitively on $[n]$} if $\Orb(\sigma_1, \ldots, \sigma_t) = \{[n]\}$, the set partition of $[n]$ with a single part.

For the third factorisation problem,  let $\omega \in \Sym{n}$ and $g$ be a
nonnegative integer.  A \emph{genus
$g$ transitive star factorisation} of $\omega$ is a tuple of
transpositions $(a_1\, n) (a_2\, n) \cdots (a_m \, n) = \omega$ 
that satisfies the following two conditions.
\begin{enumerate}
	\item[S1.] The number of transpositions is $m = n+c(\omega)-2+2g$.
	\item[S2.] The tuple of transpositions acts transitively on $[n]$.
\end{enumerate}
The terminology ``star" comes from the shape of the graph whose edges are given
by the allowable set of transpositions; each transposition must contain the
symbol $n$.
Because all factors contain the symbol $n$, condition S2 is equivalent to the
condition
\begin{enumerate}
	\item[S2'.] The tuple of transpositions contains $(i\, n)$ for each $1 \leq i \leq
		n-1$.\footnote{While this simpler condition for transitivity
	is more convenient for star factorisations, we presented S2 with the more
	general transitivity condition that applies to other factorisation types for historical context.  Transitivity
	conditions are often present in factorisation problems in $\Sym{n}$;  see Goulden and
Jackson \cite{goulsurvey}.} 
\end{enumerate}
 Let $A_g(\omega)$ be the set of such factorisations
and $a_g(\omega)$ be its cardinality.
Without the transitivity constraint S2, the number of genus $g$ star
factorisations of $\omega$ is evidently $[\omega] J_n^{m}$.  
To account for transitivity,  we introduce a map, 
denoted by $T_n$.  It is defined on products of permutations by
$$
    T_n(\tau_1 \tau_2 \dots \tau_t) = \begin{cases}
		\tau_1 \tau_2 \dots \tau_t, & \text{if } \{\tau_1, \ldots, \tau_t\}
		\text{ acts transitively on } [n], \\
		0, & \text{otherwise}. 
    \end{cases}
$$
We extend $T_n$ linearly to any finite polynomial expression of the form
$\sum_i a_i C^{t_i}_{21^{n-2}}$.
In particular, for any power of a Jucys-Murphy element $J_i^j$ and symmetric
function $f$, the expressions $T_n(J_i^j)$ and $T_n(f(\Xi_n))$ are well-defined.
Using the same notation, we define a linear operator $T_n : \Lambda \rightarrow \GSym{n}$ by
$T_n(f) : = T_n(f(\Xi_n))$.   The operator $T_n$ is therefore by definition
linear, but it clearly not an algebra homomorphism, i.e. $T_n(fg) \neq
T_n(f)T_n(g)$.  For example, we find
$T_n(e_{n-1} e_1)$ is not 0, but $T_n(e_{n-1}) T_n(e_1) = 0$.  We continue to
write $T_n(f(\Xi_n))$ even with this understanding to be explicit.

The connection between transitive star factorisations and $T_n$ is
simple;  for $m=n+c(\omega) - 2 + 2g$, we have
\begin{equation}\label{eq:starjm}
	a_g(\omega) = [\omega] T_n(J_n^{m}).
\end{equation}  
The consideration of $T_n(J^m_n)$ was introduced by Goulden and Jackson \cite{goul:8}, who
referred to it as the \emph{transitive power of
the Jucys-Murphy element $J_n$}.  It is easy to verify that $J_n^{m}$ 
is not central in general.  From \eqref{eq:starjm}, the function $a_g(\cdot)$ is central
(\emph{i.e.} $a_g(\omega) = a_g(\gamma)$ whenever $\omega$ and $\gamma$ are conjugate)
if and only if $T_n(J_n^{m})$ is central in the group algebra.   The following
theorem is also found in \cite{goul:8}.
\begin{theorem}\label{thm:transpower}
	For any positive integer $n$ and nonnegative integer $t$, the transitive power
	$T_n(J_n^{t})$ is central;  \emph{i.e.} $T_n(J_n^{t}) \in
	\ZSym{n}$.  Equivalently, for any nonnegative $g$, the function $a_g(\cdot)$
	is central.
\end{theorem}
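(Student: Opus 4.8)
The plan is to derive the centrality of $T_n(J_n^{t})$ from the already‑known centrality of monotone double Hurwitz numbers, by establishing that transitive star factorisations and monotone double Hurwitz factorisations of the same permutation are equinumerous. By \eqref{eq:starjm} it is equivalent to show that $a_g(\cdot)$ is a class function on $\Sym{n}$ for every nonnegative $g$; and by \eqref{eq:centsym} together with Theorems \ref{thm:jm} and \ref{thm:jucyscentral}, the function $\doublehur{\cdot}$ is a class function. Hence it suffices to prove
\[
	a_g(\omega) \;=\; \doublehur{\omega} \qquad \text{for every } \omega \in \Sym{n},
\]
since then $T_n(J_n^{t}) = \sum_{\omega} a_g(\omega)\,\omega = \sum_{\omega} \doublehur{\omega}\,\omega \in \ZSym{n}$ follows immediately.

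Before attempting the correspondence I would check that the numerology is consistent. A genus $g$ transitive star factorisation of $\omega$ has $m = n + c(\omega) - 2 + 2g$ factors; by S2', exactly $n-1$ of these are ``first appearances'' of the symbols $1, \dots, n-1$ and the remaining $c(\omega) - 1 + 2g$ are repeats. An element of $\setdoublehur{\omega}$ consists of an $n$-cycle $\sigma$ together with $c(\omega) - 1 + 2g$ monotone transpositions; since a single $n$-cycle is the minimal transitive structure on $[n]$ and so morally ``replaces'' $n-1$ transpositions, the identity $(n-1) + (c(\omega)-1+2g) = m$ is exactly what a bijection $A_g(\omega) \to \setdoublehur{\omega}$ would require, with $\sigma$ encoding the subword of first appearances and the monotone transpositions encoding the repeated factors. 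As a further sanity check I would evaluate both sides directly in the base cases $n \le 2$ and for the extreme permutations $\omega = \mathrm{id}$ (both sides equal $(n-1)!$ times a Catalan number in genus $0$) and $\omega \in C_{(n)}$ (both sides equal $1$ in genus $0$).

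The heart of the proof — and the step I expect to be the main obstacle — is to construct the map $A_g(\omega) \to \setdoublehur{\omega}$ explicitly and show it is a bijection. The natural attempt is an algorithm that scans the star word $(a_1\,n)(a_2\,n)\cdots(a_m\,n)$ and peels off the distinguished symbol $n$: at each first appearance of a symbol one extends a growing cyclic structure (which will become $\sigma$), while at each repeated factor one emits a transposition among $\{1,\dots,n-1\}$. Two points require real care. First, the bookkeeping must be arranged so that the emitted transpositions come out \emph{monotone}, that is, with weakly increasing larger entries; this will almost certainly force one to process the symbols in a specific order dictated by the evolving orbit structure, rather than in the order in which they occur in the word, and getting this ``straightening'' correct is the crux of the matter. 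Second, one must verify that the output is genuine: that $\sigma$ is indeed an $n$-cycle, that the product relation $\sigma \cdot (\text{emitted transpositions}) = \omega$ holds, that the genus is preserved (an Euler–characteristic / cycle‑count bookkeeping), and that the procedure is invertible.

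Finally, I would note a softer, non‑bijective route that still proves the theorem: establish $a_g(\omega) = \doublehur{\omega}$ by a direct computation in $\GSym{n}$ rather than a bijection — for instance by a recursion on $n$ that peels off the factor $J_n$ and uses that $J_n$ commutes with $\GSym{n-1}$, the group algebra of the subgroup of $\Sym{n}$ fixing $n$. This would settle Theorem \ref{thm:transpower}, but it is the bijective form of $a_g(\omega) = \doublehur{\omega}$ that keeps the argument fully combinatorial and uniform across all genera, which is the feature of interest.
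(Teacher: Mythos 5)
Your overall strategy is the same as the paper's: reduce centrality of $T_n(J_n^t)$ via \eqref{eq:starjm} to centrality of $a_g(\cdot)$, get centrality of $\doublehur{\cdot}$ for free from \eqref{eq:centsym} and Theorem \ref{thm:jucyscentral}, and then prove $a_g(\omega)=\doublehur{\omega}$ (this is exactly Theorem \ref{thm:hurwitzrelation}, which the paper proves recursively in Section \ref{sec:recurs} and bijectively in Theorem \ref{thm:bijectiongamma}). The reduction step and your numerology checks are correct. However, there is a genuine gap: the identity $a_g(\omega)=\doublehur{\omega}$ is the entire content of the argument, and you do not prove it. You describe the intended scan of the star word (first appearances build the $n$-cycle $\sigma$, repeats emit transpositions) and then explicitly flag the ``straightening'' needed to make the emitted transpositions monotone as the main obstacle, without resolving it. That obstacle is precisely where the work lies. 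What the scan produces naturally is a factorisation $\sigma\,\tau_1\cdots\tau_k=\omega$ whose transpositions are monotone only with respect to the order $\prec$ of first appearances, $i_1\prec\cdots\prec i_{n-1}\prec n$ (this is \eqref{eq:continue1} in the paper, obtained by leftward Hurwitz moves); converting monotonicity relative to $\prec$ into monotonicity for the natural order on $[n]$, reversibly, is the paper's main technical lemma (the two-stage Hurwitz-move bijection $\Lambda_j$ of Theorem \ref{thm:bijectioninc}, assembled into $\Lambda^\prec$ in Corollary \ref{cor:moncentral}). Without some substitute for that re-ordering bijection, your map is not defined and invertibility cannot be checked, so the theorem does not follow from what you have written.

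Your fallback ``softer route'' is also only a hint. The paper's recursive proof does not simply peel off $J_n$; because star factorisations are not a priori central, one must refine the count to $a_g(i,\alpha)$, recording the length $i$ of the cycle containing the distinguished symbol $n$, use Goulden and Jackson's recurrence \eqref{eq:recur} for that refined quantity, and then show by a join--cut analysis of the last factor that $\doublehur{i,\alpha}$ satisfies the same recurrence and initial condition (this step uses the already-known centrality of $\doublehur{\cdot}$, which is legitimate in your set-up too). As stated, ``peel off the factor $J_n$ and use that $J_n$ commutes with $\GSym{n-1}$'' does not account for the transitivity constraint or for the dependence on the cycle containing $n$, so it would need to be developed into essentially this refined recursion before it counts as a proof.
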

One can easily observe that 
\begin{equation}\label{eq:transpower}
	T_n(J_n^t) = T_n(p_t(\Upxi_n)), 
\end{equation}
but clearly
$J_n^t \neq p_t(\Upxi_n)$, as seen in the following example where $n=t=4$:
\begin{align*}
	J_4^4 &= 15e + 8\left( (2\, 3\, 4) + (2\, 4\, 3) + (1\, 2\, 4) + (1\, 3\,
	4) + (1\, 4\, 2) + (1\, 4\, 3) \right)\\
	      & \phantom{+++}+ 3 \left(
			(1\, 2\, 3) + (1\, 3\, 2) \right) + 4\left( (1\, 2) (3\,
				4) + (1\, 4)(2\, 3) \right)\notin \ZSym{n},\\
	p_4(\Upxi_4) &= 22K_{1111} + 8K_{31} + 4K_{22},\\
	T_4(J_4^4) &= T_4(p_4(\Upxi_4)) = 3K_{31} + 4K_{22}.
\end{align*}
That $a_g(\cdot)$ is central for any $g$ is
a surprising fact given the asymmetric role of the symbol $n$ in star
factorisations.  We may therefore write $a_g(\lambda)$ to be the number of genus
$g$ transitive star factorisations of any fixed permutation in $C_\lambda$.   We
emphasise that the transitivity constraint S2 is crucial for the centrality of
$a_g(\cdot)$; this follows from $T_n(J_n^m)$ being central while $J_n^m$ is not.
More concretely, letting $\omega = (1\, 2)(3)$ and $\gamma = (1)(2\, 3)$, one can easily confirm
that $a_0(\omega) = a_0(\gamma) =2$, but the number of genus 0 star factorisations that
are not constrained by S2 of $\omega$ and $\gamma$ are 2 and 3, respectively.
Since transitivity is essential to centrality for star
factorisations, in what follows transitivity will be assumed even when we simply write \emph{star factorisations}, unless specifically noted otherwise. 

We note that if $\omega$ and $\gamma$ are conjugate and have no fixed points,
then
\begin{equation}\label{eq:notfixed}
	[\omega] T_n(J_n^\ell) = [\omega] J_n^\ell = [\omega] p_\ell(\Upxi_n) =
	[\gamma] p_\ell(\Upxi_n) = [\gamma] J_n^\ell = [\gamma] T_n(J_n^\ell).
\end{equation}
The middle equality holds by Theorem \ref{thm:jucyscentral}.  Thus the main content of Theorem \ref{thm:transpower} is that
\eqref{eq:notfixed} also holds when $\omega$ and $\gamma$ are conjugate and
have fixed points.  The work on star factorisations was initiated by Pak \cite{pak1999reduced},
where the author found a formula for the case $g=0$ when all cycles of the
target permutation $\omega$
have some fixed length $k$, except the symbol $n$ is a fixed point.  Irving and 
	the second author
	\cite{rattanirving:1} then found a general formula for $a_0(\omega)$ for
	any $\omega \in \Sym{n}$, and it was in \cite{rattanirving:1} that the centrality phenomenon was
	first observed.  Goulden and Jackson then found a formula for
	$a_g(\omega)$ for all $g \geq 0$ and all $\omega \in \Sym{n}$, confirming that the
	centrality phenomenon holds in general.  Goulden and Jackson's results in \cite{goul:8} are, in fact, much stronger than Theorem \ref{thm:transpower} as they give full
enumerative formulae for $a_g(\omega)$.
However, the techniques of Goulden and Jackson and of Irving and 
the second author
do not expose why $a_g(\cdot)$ is
central \emph{combinatorially}:  centrality of $a_g(\omega)$ is obtained as a
corollary of their formulae by observing that the formulae depend only on
the conjugacy class of $\omega$.  Goulden and Jackson \cite[Section
4]{goul:8} thus explicitly pose the natural problem of finding a combinatorial
explanation of the centrality of $a_g(\cdot)$ that works in all genera.  This problem was one of the main motivations for the present article, and
its resolution one of our main results (see Section \ref{sec:mainresults}).
Further related work in this area was done by F\'eray \cite{feray4}, who gave 
algebraic arguments that allowed him to greatly simplify Goulden and Jackson's
formulae;  see \eqref{eq:feray} below. Tenner \cite{tenner, tenner2} gave two different
intriguing combinatorial constructions proving the centrality of $a_g(\cdot)$,
but they are limited to the case $g=0$.

\subsection{Main results}\label{sec:mainresults}
We now give the main results of this paper, which tie together the objects
introduced above.   Our first new result connects monotone double Hurwitz
factorisations to those
enumerating star factorisations.
\begin{theorem}  \label{thm:hurwitzrelation}
	Let $\omega  \in \Sym{n}$.  Then for each nonnegative integer $g$, we have
    $$a_g(\omega) = \doublehur{\omega}.$$
\end{theorem}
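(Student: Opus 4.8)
The plan is to recast Theorem~\ref{thm:hurwitzrelation} as a single identity in the group algebra and then prove that identity. By \eqref{eq:starjm} we have $a_g(\omega) = [\omega]\,T_n(J_n^m)$ with $m = n + c(\omega) - 2 + 2g$, while \eqref{eq:centsym} together with Theorem~\ref{thm:jm} (which gives $e_{n-1}(\Upxi_n) = J_2\cdots J_n = K_{(n)}$) yields $\doublehur{\omega} = [\omega]\, e_{n-1}(\Upxi_n)\, h_{c(\omega)-1+2g}(\Upxi_n)$. Since $m - (n-1) = c(\omega)-1+2g$, Theorem~\ref{thm:hurwitzrelation} is equivalent to
\[
	T_n(J_n^m) = e_{n-1}(\Upxi_n)\, h_{m-n+1}(\Upxi_n) \qquad\text{for all } m \ge 0,
\]
where $h_j(\Upxi_n) := 0$ for $j < 0$; a short sign/transitivity argument shows that both sides vanish when $m < n-1$, so this covers all cases. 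I would record this reduction first, since the right-hand side is visibly central by Theorem~\ref{thm:jucyscentral}, which in particular makes the centrality of $a_g(\cdot)$ (Theorem~\ref{thm:transpower}) fall out as a byproduct.

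For the identity itself I would offer two routes. The \emph{recursive} route expands $T_n(J_n^m) = \sum (a_1\,n)\cdots(a_m\,n)$ over words $(a_1,\dots,a_m) \in [n-1]^m$ whose entries exhaust $[n-1]$, and peels off the last letter: such a word of length $m$ restricts to a word of length $m-1$ that is either still surjective onto $[n-1]$ or misses exactly the symbol $a_m$. This gives $T_n(J_n^m) = T_n(J_n^{m-1})\, J_n + \sum_{i=1}^{n-1} R_{n,i}^{m-1}\,(i\,n)$, where $R_{n,i}^{m-1}$ is the sum of products over length-$(m-1)$ words on $[n-1]\setminus\{i\}$ surjective onto it; after relabelling, $R_{n,i}^{m-1}$ is a copy of $T_{n-1}(J_{n-1}^{m-1})$ inside $\GSym{n}$. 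On the other side one uses the complete-homogeneous recursion $h_k(\Upxi_n) = \sum_{j=2}^n J_j\, h_{k-1}(\Upxi_j)$ together with how $e_{n-1}(\Upxi_n) = J_2\cdots J_n$ interacts with the $J_j$, and inducts on $n$ and $m$; the base case $m = n-1$ is the classical fact that the $(n-1)!$ products of $n-1$ distinct star transpositions realise each $n$-cycle exactly once, i.e. $T_n(J_n^{n-1}) = K_{(n)} = e_{n-1}(\Upxi_n)$. The \emph{bijective} route instead constructs a product-preserving bijection from the surjective words on $[n-1]$ of length $m$ to pairs consisting of an $n$-cycle $\sigma$ (presented canonically, e.g.\ as a sub-diagonal word $(b_2\,2)\cdots(b_n\,n)$ with $b_j < j$, realising $e_{n-1}(\Upxi_n)$) together with a monotone word of $m-n+1$ transpositions; summing over all terms then yields the displayed identity (and, $\omega$ by $\omega$, the bijection between $A_g(\omega)$ and $MD_g(\omega)$). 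The engine would be a straightening algorithm built from the rewriting rules $(a\,n)(b\,n) = (a\,b)(a\,n)$ for $a\ne b$ and $(a\,n)(a\,n) = \mathrm{id}$, which migrate the non-star transpositions into a weakly increasing block while recording which star transposition survives; a preliminary cardinality count, using that the number of monotone words of length $m-n+1$ is $h_{m-n+1}(1,\dots,n-1) = S(m,n-1)$ and that there are $(n-1)!\,S(m,n-1)$ surjective words of length $m$, confirms the counts are consistent.

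I expect the bijective route to be the main obstacle. The rewriting rules simultaneously alter the surviving star transposition and create new transpositions, and the cancellation rule makes the bookkeeping non-local: as the small case $\omega = \mathrm{id}$, $n = 3$ already shows, one cannot simply delete the first occurrence of each symbol and keep the remaining factors in place and still have the product — or monotonicity — survive. So the real work is to organise the straightening so that it always terminates with the transpositions sorted by largest entry, consumes exactly one $n$-cycle's worth of factors, and is reversible. For the recursive route the friction is milder: the natural star-side recursion lowers $n$ while the $h$-side recursion lowers the degree, so one must either exhibit the matching reorganisation of the $h$-recursion or set up a two-parameter induction that absorbs the mismatch. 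In both cases the final assembly should be short, relying only on the centrality of $e_{n-1}(\Upxi_n)$ and $h_k(\Upxi_n)$ and the coefficient identities \eqref{eq:monotoneggpn}--\eqref{eq:centsym} already in hand.
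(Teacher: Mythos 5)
Your reduction to the group-algebra identity $T_n(J_n^m) = e_{n-1}(\Upxi_n)\,h_{m-n+1}(\Upxi_n)$ is sound (it is exactly Corollary \ref{cor:transpowerexpression}, which the paper deduces \emph{from} Theorem \ref{thm:hurwitzrelation}), and your peeling-off-the-last-letter recursion for $T_n(J_n^m)$ and the cardinality check via $h_{m-n+1}(1,\dots,n-1)=S(m,n-1)$ are both correct. But neither of your two routes is carried past the point where the actual difficulty sits, and you say so yourself. On the recursive route, the star-side recursion produces relabelled copies of $T_{n-1}(J_{n-1}^{m-1})$ on the symbol sets $([n-1]\setminus\{i\})\cup\{n\}$, so closing the induction requires the nontrivial identity $e_{n-1}(\Upxi_n)\,h_{k}(\Upxi_{n-1}) = \sum_{i=1}^{n-1} \widetilde{G}_{n-1,i}\,(i\,n)$ matching those relabelled pieces; this is precisely the ``matching reorganisation'' you defer, and it is not a routine manipulation. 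The paper's own recursive proof (Section \ref{sec:recurs}) avoids this by working not in the group algebra but with the refined quantities $a_g(i,\alpha)$ and $\doublehur{i,\alpha}$ that track the length of the cycle containing $n$, importing the Goulden--Jackson recurrence \eqref{eq:recur} for the star side and verifying the same recurrence for the monotone side by a join-cut analysis (using the known centrality of $\doublehur{\cdot}$); that refinement is what makes the induction close.

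On the bijective route the gap is more fundamental: a straightening algorithm toward the \emph{standard} order, with the cancellation rule $(a\,n)(a\,n)=\mathrm{id}$, is the wrong engine, as your own $n=3$ example shows, and ``organise the straightening so that it terminates, consumes exactly one $n$-cycle's worth of factors, and is reversible'' is the entire content of the theorem, not a detail. The paper's bijection $\Gamma$ (Theorem \ref{thm:bijectiongamma}) never cancels factors at all: it underlines the \emph{first} occurrence of each $(i\,n)$, moves these to the front by leftward Hurwitz moves, and the key observation is that the conjugations produced along the way automatically yield transpositions that are monotone with respect to the order $\prec$ of first appearances --- an order read off from the factorisation itself, not the standard order. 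The underlined factors then merge into the full cycle $(i_1\,\cdots\,i_{n-1}\,n)$, accounting for the drop of exactly $n-1$ in the number of transpositions. The second, separate ingredient is the bijection $\Lambda^\prec$ of Corollary \ref{cor:moncentral} (built from the adjacent-swap bijections $\Lambda_j$ of Theorem \ref{thm:bijectioninc}), which converts a $\prec$-monotone factorisation into one monotone for the standard order; this relative-order sorting, done by Hurwitz moves with careful treatment of the factors $(i_j\,i_{j+1})$, is the heavy lifting your sketch leaves unaddressed. Without either the refined recursion or these two ideas (the first-occurrence order and the order-changing bijection), the proposal does not yet constitute a proof.
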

In Section \ref{sec:recurs} we give a simple recursive proof of this result.
This proof will be independent of the combinatorial proofs found in
Sections \ref{sec:combmon} and \ref{sec:combstar}.

It was explained in Sections \ref{sec:factstrans} and \ref{sec:introstar} that $a_g(\cdot)$, $\hur{\cdot}$ and
$\doublehur{\cdot}$ are known to be central from past results. Our next results
are that these can all be shown \emph{combinatorially}, and our arguments apply
to all $g \geq 0$, thus resolving the problem of finding a combinatorial proof 
of the centrality of star factorisations posed by Goulden and Jackson.  Our
resolution to their problem takes the following path.  Let $\omega$ and $\gamma$
be conjugate permutations in $\Sym{n}$ and $g$ be a nonnegative integer.  We first give 
bijective proofs of the centrality $\hur{\cdot}$ and $\doublehur{\cdot}$
in Section \ref{sec:combmon}.  In particular, in Corollary \ref{cor:mondouble},
we give a bijection $\Theta : \setdoublehur{\omega}
\rightarrow \setdoublehur{\gamma}$.  Next, in Theorem \ref{thm:bijectiongamma}, we give
a bijection $\Gamma : A_g(\omega) \rightarrow \setdoublehur{\omega}$ proving Theorem \ref{thm:hurwitzrelation}. 
Finally, composing the bijections 
\begin{equation}\label{eq:centstar}
	A_g(\omega) \overset{\Gamma}{\longrightarrow} \setdoublehur{\omega}
	\overset{\Theta}{\longrightarrow} \setdoublehur{\gamma}
	\overset{\Gamma^{-1}}{\longrightarrow} A_g(\gamma)
\end{equation}
gives a combinatorial proof of the centrality of star
factorisations.

It follows from Theorems \ref{thm:jucyscentral} and \ref{thm:transpower}  that
$T_n(J_n^t)$ is expressible as a symmetric function of Jucys-Murphy elements.
The following corollary, which immediately follows from Theorem
\ref{thm:hurwitzrelation} and \eqref{eq:centsym} and \eqref{eq:transpower}, gives such an expression.
\begin{corollary}  \label{cor:transpowerexpression}
    For any $n \geq 2$ and $k \geq 0$, we have
    $$ T_n(p_{n-1+k}(\Upxi_n)) = T_n(J_n^{n-1+k})  = J_2 \cdots J_n \cdot
    h_k(\Upxi_n) = e_{n-1}(\Upxi_n) \cdot h_k(\Upxi_n).$$
\end{corollary}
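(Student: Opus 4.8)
The plan is to prove the three equalities of Corollary~\ref{cor:transpowerexpression} in turn, the two outer ones being formal and the middle one carrying the content. The identity $T_n(p_{n-1+k}(\Upxi_n)) = T_n(J_n^{n-1+k})$ is exactly \eqref{eq:transpower} with $t = n-1+k$. For $e_{n-1}(\Upxi_n) = J_2 \cdots J_n$, one unwinds the definition: $e_{n-1}(\Upxi_n) = e_{n-1}(J_1, J_2, \ldots, J_n, 0, \ldots)$ is the sum of the $\binom{n}{n-1}=n$ products of $n-1$ distinct elements among $J_1, \ldots, J_n$, and since the $J_i$ commute and $J_1 = 0$, the only surviving summand is $J_2 J_3 \cdots J_n$ (alternatively this is immediate from Theorem~\ref{thm:jm}, as both sides equal $K_{(n)}$).

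Next I would establish the remaining equality $T_n(J_n^{n-1+k}) = e_{n-1}(\Upxi_n) \cdot h_k(\Upxi_n)$. Both sides lie in $\GSym{n}$, so it is enough to compare the coefficient of an arbitrary $\omega \in \Sym{n}$. Suppose first that $k - c(\omega) + 1$ is a nonnegative even integer, say $k - c(\omega) + 1 = 2g$ with $g \geq 0$. Then $n - 1 + k = n + c(\omega) - 2 + 2g$, so chaining \eqref{eq:starjm}, Theorem~\ref{thm:hurwitzrelation}, and \eqref{eq:centsym} yields
\[
  [\omega]\, T_n(J_n^{n-1+k}) \;=\; a_g(\omega) \;=\; \doublehur{\omega} \;=\; [\omega]\, e_{n-1}(\Upxi_n) \cdot h_{c(\omega)-1+2g}(\Upxi_n) \;=\; [\omega]\, e_{n-1}(\Upxi_n) \cdot h_k(\Upxi_n),
\]
where the last step uses $c(\omega) - 1 + 2g = k$. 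This settles every $\omega$ admitting such a $g$.

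The remaining case, which I expect to be the only real (though routine) obstacle, is to check that the coefficient of $\omega$ vanishes on both sides whenever no such $g$ exists. If $k \not\equiv c(\omega)-1 \pmod 2$, then $n - 1 + k \not\equiv n - c(\omega) \pmod 2$, so a product of $n-1+k$ transpositions cannot equal $\omega$ for sign reasons; hence $[\omega]\, J_n^{n-1+k} = 0$, and $e_{n-1}(\Upxi_n) \cdot h_k(\Upxi_n)$, being a sum of products of $n-1+k$ transpositions, likewise has zero coefficient at $\omega$. If instead $k \equiv c(\omega) - 1 \pmod 2$ but $k < c(\omega) - 1$, then $n - 1 + k < n + c(\omega) - 2$; since any transitive factorisation of $\omega$ into transpositions has length at least $n + c(\omega) - 2$ (the classical genus bound, i.e.\ the genus in~S1 is nonnegative), we get $[\omega]\, T_n(J_n^{n-1+k}) = 0$, while on the other side, writing $e_{n-1}(\Upxi_n) = K_{(n)}$, for each $n$-cycle $\sigma$ the reverse triangle inequality for transposition length gives $n - c(\sigma^{-1}\omega) \geq |(n-1)-(n-c(\omega))| = c(\omega) - 1 > k$, so $[\sigma^{-1}\omega]\, h_k(\Upxi_n) = 0$ by \eqref{eq:monotoneggpn}, and summing over $\sigma \in C_{(n)}$ gives $[\omega]\, e_{n-1}(\Upxi_n) \cdot h_k(\Upxi_n) = 0$. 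With the coefficients shown to agree for every $\omega$, the middle equality follows and the corollary is proved.
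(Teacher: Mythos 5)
Your proposal is correct and follows essentially the same route as the paper, which simply observes that the identity is immediate from chaining \eqref{eq:starjm}, Theorem~\ref{thm:hurwitzrelation}, \eqref{eq:centsym} and \eqref{eq:transpower}, exactly your main case. The only difference is that you explicitly verify the degenerate coefficients (parity mismatch, or $k < c(\omega)-1$, where no nonnegative genus exists) vanish on both sides; the paper leaves these routine checks implicit, and your verification of them is sound.
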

Our final result, which is proved in Section \ref{sec:trans}, is along the lines of Corollary \ref{cor:transpowerexpression},
and gives a result analogous to the first part of Theorem \ref{thm:jucyscentral}
for transitive images.  In particular, it also gives another indirect proof of the centrality of star factorisations.
\begin{theorem} \label{thm:transsym}
    For any symmetric function $f$, we have $T_n(f(\Upxi_n)) \in Z(\GSym{n})$.
\end{theorem}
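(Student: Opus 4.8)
The plan is to reduce the statement to the case of a single power sum $p_t$, and then to the case $t = n-1+k$ for $k \geq 0$, where Corollary \ref{cor:transpowerexpression} already does the work. First I would observe that, since $T_n$ is linear and $\{p_\lambda : \lambda \vdash k,\ k \geq 0\}$ is a linear basis of $\Lambda$, it suffices to show $T_n(p_\lambda(\Upxi_n)) \in \ZSym{n}$ for every partition $\lambda$. Next I would argue that the transitivity operator only ``sees'' the total degree in a way that lets us replace a product of power sums by a single high power sum: concretely, if $f \in \Lambda$ has the property that $f(\Upxi_n)$ is supported on permutations all of which, together, already force transitivity (e.g.\ $f$ divisible by $p_{n-1}$ in an appropriate sense), then $T_n$ acts as the identity on $f(\Upxi_n)$, and the result is immediate from Theorem \ref{thm:jucyscentral}. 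So the crux is handling $p_\lambda$ with all parts small.

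The key device I would use is a ``stabilisation'' or ``padding'' trick. Given $p_\lambda(\Upxi_n) = p_{\lambda_1}(\Upxi_n)\cdots p_{\lambda_\ell}(\Upxi_n)$, multiply by $p_1(\Upxi_n)^N = J_n^N$ for $N$ large; since $p_1(\Upxi_n) = e_{n-1}(\Upxi_n)$ is\,---\,wait, that is not right, so instead I would multiply by $e_{n-1}(\Upxi_n) = J_2\cdots J_n$, which \emph{does} enforce transitivity. The point is the identity
\begin{equation*}
	T_n\big(g(\Upxi_n)\cdot e_{n-1}(\Upxi_n)\big) = g(\Upxi_n)\cdot e_{n-1}(\Upxi_n)
\end{equation*}
for any $g \in \Lambda$, because every permutation in the support of $e_{n-1}(\Upxi_n) = J_2\cdots J_n$ is an $n$-cycle (by Theorem \ref{thm:jm}, as $e_{n-1}(\Upxi_n) = K_{(n)}$), and an $n$-cycle already acts transitively, so multiplying on the right by anything keeps the product transitive. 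This shows $g(\Upxi_n)\cdot e_{n-1}(\Upxi_n)$ is always central, but that is just Theorem \ref{thm:jucyscentral} again and does not directly give $T_n(p_\lambda(\Upxi_n))$. The real content must come from a recursion: I would set up, in analogy with the recursive proof promised for Theorem \ref{thm:hurwitzrelation} in Section \ref{sec:recurs}, an identity expressing $T_n(f(\Upxi_n))$ in terms of $T_n$ of lower-degree symmetric functions together with genuinely central ``boundary'' terms of the form $g(\Upxi_n)\cdot e_{n-1}(\Upxi_n)$, and then induct on the degree of $f$.

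Concretely, the recursion I expect to exploit is the standard one relating $J_n^{t+1}$ to $J_n^t$ by conjugation/insertion of the last transposition, which in the transitive setting separates factorisations according to whether the transposition $(a\,n)$ connecting the orbit of $n$ to a new point is used for the first time. This is exactly the combinatorial content behind \eqref{eq:starjm} and the recursive proof of Theorem \ref{thm:hurwitzrelation}; translating it into the algebra of $\GSym{n}$ should yield a relation of the shape $T_n(p_{t+1}(\Upxi_n)) = \big(\text{central correction}\big) + \big(\text{operator applied to something central-preserving}\big)$, from which centrality propagates. The main obstacle, I expect, is precisely establishing this algebraic recursion cleanly for arbitrary $p_\lambda$ rather than for the single power $J_n^t$: the operator $T_n$ is not multiplicative, so one cannot factor $T_n(p_\lambda)$ through the $T_n(p_{\lambda_i})$, and one must instead track transitivity globally across a product. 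I would handle this by realising $p_\lambda(\Upxi_n)$ via a sum over coloured factorisations (each factor $(a\,n)$ tagged by which part $\lambda_i$ it comes from), imposing transitivity on the union, and then running a sign-reversing or orbit-merging argument on the coloured structure. Once the recursion is in place, the induction closes because the base case ($f$ of degree $0$, i.e.\ $f$ a scalar) is trivial and each inductive step adds only central terms; centrality of $T_n(f(\Upxi_n))$ for all $f$ then follows by linearity.
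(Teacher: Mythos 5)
There is a genuine gap: the heart of your argument is never supplied. After the (fine) linear reduction to a basis of $\Lambda$, everything rests on an ``algebraic recursion'' of the shape $T_n(p_{t+1}(\Upxi_n)) = (\text{central correction}) + (\text{central-preserving term})$, or alternatively on a ``coloured factorisation, sign-reversing or orbit-merging argument''; neither is stated precisely, let alone proved, so the proposal is a plan rather than a proof. Moreover, the sketched combinatorial model contains a misconception: in $p_\lambda(\Upxi_n) = \prod_i \bigl(J_2^{\lambda_i} + \cdots + J_n^{\lambda_i}\bigr)$ the transposition factors are of the form $(a\, j)$ for arbitrary $j$, not $(a\, n)$. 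The identity \eqref{eq:transpower}, $T_n(J_n^t) = T_n(p_t(\Upxi_n))$, is special to a single power sum (the terms $J_j^t$ with $j<n$ never move the symbol $n$ and are killed by $T_n$); it does not extend to products, precisely because $T_n$ is not multiplicative and a term $J_{k_1}^{\lambda_1}\cdots J_{k_\ell}^{\lambda_\ell}$ can be transitive with several distinct $k_i$, only one of which need equal $n$. A further obstruction to the power-sum route: a single permutation $\omega$ arises from many terms of $p_t(\Upxi_n)$ whose transposition factors can have orbits strictly coarser than $\Orb(\omega)$ (e.g.\ $(1\,2)(1\,2)$ contributing to the identity), so $T_n(p_\lambda(\Upxi_n))$ cannot be rewritten as a condition on the product permutations alone, which is what a conjugation-invariance argument needs.

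This is exactly where the paper's proof differs and succeeds. It works with the basis $\{e_\lambda\}$ rather than $\{p_\lambda\}$ and exploits the structure behind Theorem \ref{thm:jm}: every $\omega$ with $c(\omega)=n-k$ arises from $e_k(\Upxi_n)$ via a \emph{unique} strictly monotone factorisation into $k$ transpositions, and a join--cut analysis shows no factor is a cut, whence $\Orb(\tau_1,\ldots,\tau_k)=\Orb(\omega)$. Consequently
\begin{equation*}
	T_n\bigl(e_\lambda(\Upxi_n)\bigr) = \sum \omega_1\cdots\omega_\ell,
\end{equation*}
summed over tuples with $c(\omega_i)=n-\lambda_i$ and $\Orb(\omega_1,\ldots,\omega_\ell)=\{[n]\}$ -- a description purely in terms of the products, not the factorisations -- and centrality follows at once because simultaneous conjugation preserves both cycle counts and joint transitivity. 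If you want to salvage your approach, the step you must supply is an analogue of this ``orbits of factors equal orbits of the product'' statement for whatever basis you choose; your padding observation about $e_{n-1}(\Upxi_n)=J_2\cdots J_n$ and the appeal to Corollary \ref{cor:transpowerexpression} do not substitute for it, as you yourself note they only recover Theorem \ref{thm:jucyscentral}.
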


\subsection{Connections to geometry and consequences of the main
results}\label{sec:connect}

The set $\setdoublehur{\omega}$ is a special case of the following.  For
partitions $\alpha, \beta \vdash n$, consider the set of tuples $(\sigma, (a_1 \, b_1),
(a_2 \, b_2), \ldots, (a_m \, b_m))$ whose product is in $C_\beta$ that
satisfies the following conditions:
\begin{enumerate}
	\item[H0''.]  $\sigma \in C_\alpha$; 
	\item[H1''.]  $m=\ell(\alpha) + \ell(\beta) -2 + 2g$;
	\item[H2''.]  $b_1 \leq b_2 \leq \cdots \leq b_m$ (monotonicity
		condition);
	\item[H3''.] the factor $\sigma$ along with the transpositions act transitively on $[n]$.
\end{enumerate}
These conditions define transitive monotone double Hurwitz factorisations in
general. The set $\setdoublehur{\omega}$ is the special case where $\alpha =
(n)$, but is different in another way:  tuples in $\setdoublehur{\omega}$ have a
product $\omega$ that is fixed and not one that merely belongs to a specified
conjugacy class.  Also, since $\alpha = (n)$ in $\setdoublehur{\omega}$, the
factorisations are automatically transitive, explaining why
condition H3'' is omitted in the definition of $\setdoublehur{\omega}$.   The set
$\sethur{\omega}$, as presented here, does not have a transitivity constraint on
it. That transitivity is omitted as an additional constraint for
$\sethur{\omega}$ allows \eqref{eq:monotoneggpn} to hold. The articles
\cite{conv-goulden-guaypaq-novak, poly_monotone, monotone-hciz} consider monotone factorisations in depth.

The set of \emph{double Hurwitz factorisations} $H^g_{\alpha, \beta}$
are defined similarly to monotone double Hurwitz factorisations,
but the monotonicity condition H2'' is relaxed.  Let $|\mathrm{Aut}(\alpha)| =
\prod_i a_i!$, where $a_i$ are the number of parts of $\alpha$ equal to $i$.
The double Hurwitz \emph{numbers} are given by
$\frac{|\mathrm{Aut}(\alpha)||\mathrm{Aut}(\beta)|}{n!} |H^g_{\alpha, \beta}|$, and
they count the number of degree $n$ connected branched covers of the sphere by a
Riemannian surface of genus $g$ with $m + 2 = \ell(\alpha) + \ell(\beta) + 2g$ branch points, with branching of type $\alpha$
and $\beta$ over $0$ and $\infty$, respectively, and simple branching at
the remaining $m$ points (corresponding to the $m$ transpositions);  see Goulden,
Jackson and Vakil \cite[Proposition 1.1]{goulden2005towards}, who attribute
the discovery of the connection between factorisations and geometry to Hurwitz
\cite{hurwitz2}.  These numbers have a rich theory, with links to
intersection theory, tropical geometry, lattice point enumerations, and have
been the focus of much recent study; see for example \cite{borot2023double}.
The combinatorics of these numbers have been well studied in
genus zero, but are less well understood in higher genera.  Since monotone double Hurwitz
factorisations are a strict subset of double Hurwitz factorisations, monotone
double Hurwitz factorisations count a subset of these coverings of the
sphere.  They also give a combinatorial interpretation of the genus expansion of
the HCIZ integral,  and they are also related to free energy \cite{monotone-hciz}.

The Jucys-Murphy elements arise in the representation theory of the
symmetric group.  They generate the \emph{Gelfand-Tsetlin algebra}, a fact
that can
be used to inductively define all the representations of the symmetric group;
see \cite{ok:2}.   There has been a lot of work on expressions of symmetric functions evaluated at Jucys-Murphy elements \cite{feray2012,
classlassalle}, including the power sums, whose evaluations have links with
vertex operators in mathematical physics \cite{lascoux2004vertex}.  

For double Hurwitz factorisations, and their monotone version, the presence of
the transitivity constraint comes naturally from the geometry of branched covers
(the covers are connected), but the constraint of transitivity on star factorisations
seems ad-hoc;  the main justification for considering the constraint is that it
makes star factorisations, and thus transitive powers of Jucys-Murphy elements, central.  Theorem \ref{thm:hurwitzrelation} and its combinatorial proof
in Section \ref{sec:combstar}, however, illuminates why the transitivity
constraint on star factorisations may be natural, since it relates them to 
monotone double Hurwitz factorisations when $\alpha = (n)$, where the transitivity
constraint is forced.  These ideas link the importance of the transitivity
constraint in problems coming from the different areas of algebraic geometry and
representation theory, and give further evidence as to why transitivity is an
important constraint.

\subsection{Hurwitz moves} We introduce a fundamental tool in our analysis below:  \emph{Hurwitz
moves}.   We give the necessary details here, but the reader is referred to
\cite{adinroichman} for a different application.  For permutations $\gamma$
and $\omega$, we set $\gamma^\omega := \omega \gamma
\omega^{-1}$. A Hurwitz move takes as input a
factorisation and a pair $\tau \sigma$ of adjacent transpositions,
and it outputs a factorisation of the same length with only the factors $\tau$
and $\sigma$ changed.  There are two
kinds of Hurwitz moves. 
\begin{itemize}
	\item The \emph{rightward Hurwitz move} (RHM) takes $\tau \sigma$ to
		$\sigma^\tau \tau$.  
	\item The \emph{leftward Hurwitz move} (LHM) takes $\tau \sigma$ to $\sigma
		\tau^\sigma$.
\end{itemize}
In the output of an RHM we interpret $\sigma^\tau \tau $ as the
product of the two transpositions $\sigma^\tau$ (interpreted as one
transposition) and $\tau$;  we do likewise for an LHM.  Thus the number of
factors in a factorisation after applying a Hurwitz move is indeed preserved.
Observe that the two moves are inverses of each other.  Next,  if $\tau$ and
$\sigma$ commute (\emph{i.e.} they have disjoint support) than each Hurwitz move
reduces to simply swapping $\tau$ and $\sigma$ in the factorisation.   Finally,
both Hurwitz moves preserve the product of the moved factors, so both moves
preserve the product of the factorisation.

\section{A first proof of Theorem \ref{thm:hurwitzrelation}} \label{sec:recurs}

In this section we give a proof of Theorem \ref{thm:hurwitzrelation} by showing
both sides of the equation satisfy the same recurrence.  Though this proof is
short, we allow
ourselves the use of the centrality of monotone double Hurwitz factorisations.
The longer bijective proofs in the upcoming sections do not assume
centrality.  We use a recurrence for
star factorisations found in \cite{goul:8}.  In that recurrence, when finding
factorisations of $\omega$, it is necessary to record the length of the
cycle containing the distinguished symbol $n$.  To that end, we
introduce the following notation.  For positive integers
$k$ and $i$ and $\gamma \vdash k$, we let $\gamma \cup i$ be the
partition of $k+i$ consisting of all the parts of $\gamma$ plus an additional
part equal to $i$.  Let $\K^{(i)}_\gamma$ be the set of $\omega \in \Sym{n}$ such
that the symbol $n$ is in a cycle of length $i$, while the remaining cycle
lengths are given by $\gamma$.  Thus $\omega \in C_{\gamma \cup i}$ if $\omega
\in \K^{(i)}_\gamma$.

Fix positive integers $i \leq n$, and let $\alpha$ be a partition of $n-i$.  Let
$a_g(i, \alpha)$ denote the number of genus $g$ transitive star factorisations of any
permutation $\omega \in \K^{(i)}_\alpha$.  The quantity $a_g(i, \alpha)$ is well
defined, even without the assumption of the centrality of star factorisations.  Goulden and Jackson \cite[Theorem 2.1]{goul:8} found the following recurrence that specifically applies to $a_g(i, \alpha)$:
\begin{equation}\label{eq:recur}
	a_g(i, \alpha) = a_g(i-1, \alpha) + \sum_{t=1}^{\ell(\alpha)} \alpha_t a_g(i + \alpha_t, \alpha \setminus \alpha_t) + \sum_{t=1}^{i-1} a_{g-1}(i-t, \alpha \cup t).
\end{equation}
Let $\doublehur{i, \alpha}$ be the number of genus $g$ monotone double Hurwitz factorisations
of any $\omega \in \K^{(i)}_\alpha$.  The quantity $\doublehur{i, \alpha}$ is well defined by the
centrality of these factorisations.
We shall see that the numbers
$\doublehur{i, \alpha}$ satisfy the same recurrence \eqref{eq:recur}, giving our first proof of Theorem \ref{thm:hurwitzrelation}.

Now let $\alpha \cup i \vdash n$ and $\omega \in \K^{(i)}_\alpha$.  Let  $\sigma \tau_1 \tau_2 \dots \tau_m = \omega$ be a factorisation in
$\setdoublehur{\omega}$.  We split our analysis into two cases.  We use a classic
\emph{join-cut analysis}, which was also used by Goulden and Jackson to prove
\eqref{eq:recur}.  This involves the following elementary analysis on permutation
products $\nu \tau$, where $\tau = (i\, j)$ is a
transposition.  In the product $\nu \tau$, there are two possibilities for
the relative position of $i$ and $j$ given their relative position in $\nu$:  if $i$ and $j$ are in different cycles of $\nu$, then in $\nu \tau$
the symbols $i$ and $j$ are in the same cycle ($\tau$ is a \emph{join} for
$\nu$);  otherwise, the symbols $i$ and $j$ are in different cycles of
$\nu \tau$ ($\tau$ is a \emph{cut} for $\nu$).  

Case 1: $\tau_m = (j \, n)$ for some $j<n$.  Then $\sigma \tau_1 \tau_2 \dots
\tau_{m-1} = \omega (j \, n)$ is a monotone Hurwitz factorisation of $\beta:=\omega (j \, n)$.  We then have two cases based on the positions of $j$ and $n$ in the cycles of $\omega$.

Case 1a: $j$ and $n$ are in the same cycle of $\omega$.  Then $(j \, n)$ is a
cut for $\omega$, and $j$ and $n$ are in separate cycles $\beta$.  Then
for some $t < i$ the symbol $n$ is in a new cycle of length $i-t$ and $j$ is in a new
cycle of length $t$ in $\beta$, so $\beta \in \K^{(i-t)}_{\alpha \cup t}$.  Here
$t$ is obtained as the smallest natural number such that $\omega^t(j) = n$.  A quick computation gives that $\sigma \tau_1 \dots
\tau_{m-1}$ is a $g-1$ genus factorisation of $\beta$.  Since there are $i-1$ other symbols in the
same cycle as $n$ in $\omega$, we obtain each possible value of $t=1,2,\dots,i-1$ in this way.  The contribution from this case is therefore $\sum_{t=1}^{i-1} md_{g-1}(i-t, \alpha \cup t)$.

Case 1b: $j$ and $n$ are in different cycles of $\omega$.  Then $(j \, n)$ is a
join for $\omega$, and two cycles of $\omega$ are merged into a new cycle
containing $n$ in $\beta$.  This means that if $j$ is in a cycle of length
$\alpha_t$, then $\beta \in \K^{(\alpha_t + i)}_{\alpha \setminus \alpha_t}$ and
$\sigma \tau_1 \dots \tau_{m-1}$ is a factorisation of $\beta$ in
$\setdoublehur{\beta}$.  For each $t=1,\dots,\ell(\alpha)$, there are
$\alpha_t$ choices of $j$ that are in a cycle of length $\alpha_t$.  The
contribution from this case is therefore $\sum_{t=1}^{\ell(\alpha)} \alpha_t
\doublehur{i + \alpha_t, \alpha \setminus \alpha_t}$.

Case 2: $\tau_m = (j \, k)$ with $j < k < n$.  Since the factorisation $\sigma
\tau_1 \cdots \tau_m = \omega$ is monotone, no transposition contains
the symbol $n$.  It follows that $\sigma^{-1}(n) =
\omega^{-1}(n)$.  Set $t:=\omega^{-1}(n)$.  Observe that $\sigma':= (t\, n) \sigma$ is a permutation with a single $(n-1)$-cycle and a fixed point $n$.  Then 
\begin{equation}\label{eq:case2jesse}
	\sigma' \tau_1 \tau_2 \dots \tau_m = \beta, 
\end{equation}
where $\beta:=(t\, n) \omega$.  Note that $n$ is a fixed point of $\beta$.  It follows that we
can view the factorisation \eqref{eq:case2jesse} as a monotone double Hurwitz
factorisation in  $\Sym{n-1}$.  The construction in \eqref{eq:case2jesse} gives a bijection to all factorisations in $\setdoublehur{\beta}$.
Clearly $\beta$ has cycle type $\alpha \cup i-1$.   By the centrality of
	monotone factorisations, the set $\setdoublehur{\beta}$ is counted by
	$\doublehur{\alpha \cup i-1} = \doublehur{i-1, \alpha}$.

\sloppy Putting these cases together gives the following recurrence, which is the same as for transitive star factorisations:
\begin{equation*}
    \doublehur{i, \alpha} = \doublehur{i-1, \alpha} + \sum_{t=1}^{\ell(\alpha)} \alpha_t
    \doublehur{i + \alpha_t, \alpha \setminus \alpha_t} + \sum_{t=1}^{i-1} md_{g-1}(i-t, \alpha \cup t).
\end{equation*}
\sloppy The initial conditions are also the same for the two problems: $a_0(1,
\epsilon) = md_0(1, \epsilon) = 1$, where $\epsilon$ indicates the empty
partition.  We therefore see for each permutation $\omega \in \K^{(i)}_\alpha$
has $a_g(\omega) = md_g(\omega)$, and the result follows.

\section{A combinatorial proof of centrality of monotone factorisations}\label{sec:combmon}

In this section we give combinatorial proofs of the centrality of both types of monotone
Hurwitz factorisations, and we begin with $M_g(\omega)$.   Let $\prec$ be the ordering on $[n]$ given by $i_1 \prec i_2 \prec \cdots \prec
i_n$ and  let $\omega \in \Sym{n}$.  A \emph{monotone factorisation of $\omega$
relative to $\prec$} is a factorisation where condition H2 is replaced by the
condition that each transposition satisfies $a_i \prec b_i$ and $b_1 \preceq b_2
\preceq \cdots \preceq b_n$.  Let $\sethurprec{\omega}{\prec}$
be the set of such factorisations and $\hurprec{\omega}{\prec}$ its
cardinality.  Thus for the natural order $<$ on $[n]$, we have
$\sethur{\omega} =\sethurprec{\omega}{<}$ by definition.

Now let $\precj$ be the ordering obtained from $\prec$, but $i_j$ and $i_{j+1}$
are swapped.  We have the following theorem, whose proof is fundamental to this
manuscript.  The promised bijective proofs of the centrality of monotone Hurwitz
and double Hurwitz factorisations are given in Corollaries \ref{cor:order} and
\ref{cor:mondouble}.

\begin{theorem}\label{thm:bijectioninc}
	Let $\omega \in \Sym{n}$ and $g$ a nonnegative integer.  Then for every
	integer $1 \leq j \leq n-1$, there exists a bijection $\Lambda_j : \sethurprec{\omega}{\prec}
	\rightarrow \sethurprec{\omega}{\precj}$.
\end{theorem}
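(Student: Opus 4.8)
The plan is to construct the bijection $\Lambda_j$ by a local Hurwitz-move argument, working only with the block of transpositions $(a_i\, b_i)$ whose larger element $b_i$ equals $i_j$ or $i_{j+1}$ (relative to $\prec$), since the remaining transpositions are unaffected by swapping the ranks of $i_j$ and $i_{j+1}$. Write $u = i_j$ and $v = i_{j+1}$, so that $u \prec v$ but $v \precj u$. In any factorisation $F \in \sethurprec{\omega}{\prec}$, the monotonicity condition groups $F$ into consecutive blocks by the value of $b_i$; by hypothesis each transposition has $a_i \prec b_i$, and in $F$ the $u$-block (those factors with $b_i = u$) comes immediately before the $v$-block (those with $b_i = v$). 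After the swap, a factorisation in $\sethurprec{\omega}{\precj}$ must have the $v$-block before the $u$-block, and each of its transpositions must have smaller element $\precj$-below the larger one. So the task reduces to a purely local statement: given the concatenated product $P$ of the $u$-block followed by the $v$-block — a product of transpositions all of whose elements lie in $\{u, v\} \cup \{w : w \prec u\}$, with the $u$'s appearing as the maximum in the first part and the $v$'s as the maximum in the second part — produce the same product $P$ written as a $v$-block followed by a $u$-block, compatibly with $\precj$, and show the map is reversible.

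The key step is to handle this local rewriting using the leftward and rightward Hurwitz moves introduced in the excerpt. First I would push the single transposition $(u\,v)$, if it occurs, to the correct side: in $F$ it sits at the top of the $u$-block (it is the unique factor of the $u$-block whose other entry is $v \succ u$ — wait, actually $(u\,v)$ has larger element $v$, so it is never in the $u$-block; it lies at the bottom of the $v$-block). One handles $(u\,v)$ carefully, since it is exactly the factor whose "larger element" changes identity under the swap. For every other factor, its larger element is either $u$ or $v$ and that remains true after relabelling; a factor $(w\,u)$ with $w \prec u$ stays a legal $\precj$-factor, and similarly $(w\,v)$. So I would: (1) if present, commute or Hurwitz-move the factor $(u\,v)$ out of the way; (2) move the entire (remaining) $v$-block leftward past the entire $u$-block using repeated Hurwitz moves, which conjugates each $u$-factor by the product of the $v$-factors it crosses — crucially, conjugating a transposition $(w\,u)$ by a transposition supported on $\{u,v,\ldots\}$ whose larger element is $v$ keeps its larger element equal to $u$ (unless the conjugating transposition is $(u\,v)$ itself, which is why $(u\,v)$ was extracted first), so monotonicity relative to $\precj$ is preserved; (3) reinsert $(u\,v)$ on the correct side and verify the blocks are now in $\precj$-order with all local monotonicity conditions satisfied. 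The inverse map is obtained by running the reverse Hurwitz moves (RHM $\leftrightarrow$ LHM), which gives injectivity and surjectivity simultaneously; because the product of the moved factors is preserved by every Hurwitz move, the global product is still $\omega$, and because the genus is determined by the number of factors (condition H1'), which is unchanged, genus is preserved.

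The main obstacle I expect is the careful bookkeeping around the factor $(u\,v)$ and, more generally, verifying that after conjugating $u$-factors past the $v$-block the result genuinely lands in $\sethurprec{\omega}{\precj}$ — i.e. that no factor acquires a larger element outside $\{u,v\} \cup \{w \prec u\}$ and that $(u\,v)$, the one factor whose "orientation" flips, ends up correctly placed (it must be at the top of the new $u$-block, since under $\precj$ its larger element is $u$). A clean way to organise this is to prove a small lemma: if $\tau$ is a transposition with larger $\prec$-element $v$ and $\tau \neq (u\,v)$, and $\rho$ is any transposition with larger $\prec$-element $u$, then $\rho^\tau$ (and $\rho^{\tau^{-1}} = \rho^\tau$) again has larger $\prec$-element $u$ and support in $\{u\}\cup\{w\prec u\}$; this is a one-line case check on whether $\tau$ moves $u$ or not. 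With that lemma in hand, steps (2) and (3) are routine, and the bijectivity is immediate from the invertibility of Hurwitz moves. One should also double-check the edge cases where the $u$-block or $v$-block (or both) is empty, where $\Lambda_j$ is essentially the identity on that region.
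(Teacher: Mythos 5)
Your overall strategy (swap the two adjacent blocks of factors with larger element $i_j$ resp.\ $i_{j+1}$ by local Hurwitz moves, leaving everything else fixed) is the same one the paper uses, in mirror image: the paper moves the $i_j$-factors rightward, preserving them exactly and conjugating the $i_{j+1}$-factors, while you move the $i_{j+1}$-block leftward, preserving it and conjugating the $i_j$-factors. But your write-up has genuine gaps, and they sit exactly where the real work of the theorem is. First, your ``small lemma'' is false: writing $u=i_j$, $v=i_{j+1}$, take $\rho=(w\,u)$ and $\tau=(w\,v)$ with $w\prec u$, so $\tau\neq(u\,v)$; then $\rho^{\tau}=(u\,v)$, whose larger $\prec$-element is $v$ and whose support is not contained in $\{u\}\cup\{w':w'\prec u\}$. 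The case you checked (whether $\tau$ moves $u$) is not the dangerous one; the dangerous one is $\tau$ moving the \emph{small} symbol of $\rho$. These conjugations create new factors $(u\,v)$ during your step (2), so the situation you tried to quarantine in steps (1) and (3) reappears in the middle of the process. (Such a factor is still a legal $u$-block factor under $\precj$, so the construction does not instantly fail, but your stated justification does, and these created factors are precisely the delicate objects.)

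Second, the treatment of $(u\,v)$ itself is underspecified and partly wrong. There may be several copies of $(u\,v)$ in the original factorisation, not one; and $(u\,v)$ commutes with essentially nothing in the two blocks, since every block factor contains $u$ or $v$, so ``commute it out of the way'' is not available. Any Hurwitz move that relocates $(u\,v)$ past a block factor changes that factor's larger element (e.g.\ an RHM past $(x\,v)$ turns it into $(x\,u)$, an LHM past $(w\,u)$ turns it into $(w\,v)$), so extraction and reinsertion are not product-neutral rearrangements of unchanged factors; this is exactly the non-routine bookkeeping that the paper's Stage 2 confronts head-on, by moving each $(i_j\,i_{j+1})$ rightward past the factors $(a_k\,i_{j+1})$ while converting them to $(a_k\,i_j)$, and then proving that specific move reversible. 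Finally, bijectivity cannot be dispatched by ``run the reverse Hurwitz moves'': in the output, factors equal to $(u\,v)$ that were created by conjugation are indistinguishable from ones you extracted and reinserted, so you must exhibit an inverse procedure that is well defined on all of $\sethurprec{\omega}{\precj}$ and show both composites are the identity (the paper does this by identifying, e.g., the leftmost $(i_{j+1}\,i_j)$ and the factors $(a\,i_j)$ to its left). As written, your steps (1)--(3) do not determine a well-defined map, let alone a bijection.
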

\begin{proof}
	Throughout, when a transposition $(a\, b)$ is written, it is assumed
	that $a$ is less than $b$ in the order being considered.  The ordering
	at the beginning of the proof will be $\prec$, and this will remain so until
	noted.
	We define the function $\Lambda_j$ in two stages.   Let $f \in
	\sethurprec{\omega}{\prec}$.

\textbf{Stage 1:} The factorisation $f$ has a maximal contiguous
	substring of transpositions (possibly empty, in which case $\Lambda_j$ fixes
	$f$) of the form
\begin{equation}\label{eq:substring}
	\cdots \underbrace{( \cdot \, i_j) \cdots (\cdot \, i_j)}_{\text{string }1} \underbrace{(\cdot \, i_{j+1}) \cdots (\cdot \,
	i_{j+1})}_{\text{string }2} \cdots.
\end{equation}
String $1$ contains all transpositions of the form $(a\, i_j)$ where $a \prec i_j$ and
string $2$ contains all transpositions of the form $(a\, i_{j+1})$, where
$a \prec i_{j+1}$.
The transpositions to the left of string 1 contain only symbols less than
$i_j$ under $\prec$, while all the transpositions to the right of string $2$
have the form $(a\, b)$ where $i_{j+1} \prec b$.  The function $\Lambda_j$ will
produce a factorisation of $\omega$, and it will leave all transpositions
outside of the strings 1 and 2 fixed, so $f$ and $\Lambda_j(f)$ will differ on
only the substring \eqref{eq:substring}.  Note that $f$ may contain some
transpositions of the form $(i_j\, i_{j+1})$, and these will only be in string
$2$.  If either string 1 or string 2 is empty, then string 1' = string 2 (former
case) or string 2' = string 1 (latter case) in \eqref{eq:intermed} in the output
of Stage 1 below.  We assume otherwise in what remains of Stage 1.

Beginning with the rightmost transposition in string $1$,
the function $\Lambda_j$ will apply Hurwitz moves to adjacent
transpositions of the form $(a\, i_j)$ and $(b\, i_{j+1})$ according to
the following cases:  1) $a \neq b$ and $b \neq i_j$; 2) $a=b$; and 3) $b=i_j$.  Note that $a \neq i_{j+1}$ because $a \prec i_j \prec i_{j+1}$. 
\begin{enumerate}[leftmargin=9ex]
	\item[Case 1:] $a \neq b$ and $b \neq i_j$.  In this case the two
		transpositions have disjoint support and commute, so we may apply either a LHM or RHM.  The output of the move is the same in either case, and it is $(b\, i_{j+1})
		(a\, i_j)$.
	\item[Case 2:] $a=b$.  In this case we apply a RHM, and the output of
		the move is $(i_j\, i_{j+1}) (a\, i_j)$.
	\item[Case 3:] $b=i_j$.  In this case we also apply a RHM, and the output of the move is
		$(a\, i_{j+1}) (a\, i_j)$.
\end{enumerate}
In all Cases 1-3, the two transpositions created by the operations are
each, individually, consistent with the order $\prec$.  For example, if Case 1 is
applied, both transpositions $(b\, i_{j+1})$ and $(a\, i_j)$ are consistent with
$\prec$, \emph{i.e.} $b \prec i_{j+1}$ and $a \prec i_j$.  This observation includes the transposition $(i_j\, i_{j+1})$ produced
in Case 2.

After an application of one of the moves above, in all cases the transposition
on the right is $(a\, i_j)$,
while the transposition to the left contains $i_{j+1}$.  We continue to move
the transposition $(a\, i_{j})$ to the right in this manner 
until there are no transpositions of the form $(b \, i_{j+1})$ to the
right of it.  We apply this process again for the second
rightmost transposition in string $1$ in the original factorisation $f$, and
then repeat.  In particular, a transposition of the form $(i_j\, i_{j+1})$ is never moved to the right.

This ends Stage 1.  Notice that all the transpositions originally in string $1$ are
preserved, in order, and to the right of all transpositions originally in string $2$ of $f$,
all of which may have changed, but all have the property that they contain $i_{j+1}$.
As Hurwitz moves preserve the product of the factorisation, the resulting object is
a factorisation of $\omega$.  Thus, at this point, we have a factorisation of
$\omega$ of the form
\begin{equation}\label{eq:intermed}
	\cdots \underbrace{( \cdot \, i_{j+1}) \cdots (\cdot \, i_{j+1})}_{\text{string }1'} \underbrace{(\cdot \, i_{j}) \cdots (\cdot \,
	i_{j})}_{\text{string }2'} \cdots,
\end{equation}
where string $2'$ is precisely string $1$.  This new factorisation may not, however, lie in
$\sethurprec{\omega}{\precj}$ because of the presence of transpositions of
the form $(i_j\, i_{j+1})$ in string $1'$ (in $\precj$ we have $i_{j+1} \precj
i_j$).  These transpositions may have been in the original factorisation $f$ or
created in Case 2.  Transpositions of the form $(i_j\, i_{j+1})$ are only in
string $1'$.  We deal with these transpositions in Stage 2 if they exist,
otherwise the algorithm terminates here.  But we first note that we can recover the original
factorisation by starting with the leftmost factor in string $2'$ of
\eqref{eq:intermed} and reversing the process in Stage 1 by applying LHMs.

\textbf{Stage 2:}  Let the position of the rightmost transposition of the form $(i_j\, i_{j+1})$ in
string $1'$ be $r$.  To its right is a substring of transpositions in string $1'$ consisting of transpositions of the form $(a\, i_{j+1})$ where $a \neq i_j$;  let the transpositions in this
string be $(a_k\, i_{j+1})$ for $1 \leq k \leq t$ (note that $t=0$ is possible
in which case the substring is empty).  That is,
the factorisation at the end of Stage 1 has the form
\begin{equation}\label{eq:premove}
	\cdots \underbrace{( \cdot \, i_{j+1}) \cdots (i_j\, i_{j+1}) \cdots
		\smalloverbrace{(i_j\, i_{j+1})}^{\mathclap{r^\upth \text{ transposition}}} (a_1\, i_{j+1}) \cdots (a_t\, i_{j+1})}_{\text{string }1'} \underbrace{(\cdot \, i_{j}) \cdots (\cdot \,
	i_{j})}_{\text{string }2'} \cdots.
\end{equation}
Move the transposition $(i_j\, i_{j+1})$ in position $r$ to after the
transposition $(a_t\, i_{j+1})$, and for each transposition $(a_k\, i_{j+1})$,
replace it with $(a_k\, i_{j})$.   The output of applying this
procedure to \eqref{eq:premove} is
\begin{equation}\label{eq:stage2}
	\cdots ( \cdot \, i_{j+1}) \cdots (i_j\, i_{j+1}) \cdots
	{\smalloverbrace{(\phantom{i_j\,
	i_{j+1}})}^{\mathclap{\myatop{r^{\scriptscriptstyle \upth}
{\scriptstyle \text{ transposition}}}{\text{moved}}}}}
	(a_1\, i_{j}) \cdots (a_t\,
	i_{j})\smalloverbrace{(i_{j+1}\, i_{j})}^{\myatop{{\scriptstyle \text{to}}}{\text{here}}} \underbrace{(\cdot \, i_{j}) \cdots (\cdot \,
	i_{j})}_{\text{string 2'}} \cdots.
\end{equation}
We write the new transposition as $(i_{j+1}\, i_j)$  to indicate that it is now
consistent with the order $\precj$.
We leave it to the reader to check that this move preserves the product of the
factorisation (moving the transposition as stated can be seen as a sequence of RHMs).
Note that if there is only one transposition $(i_j\, i_{j+1})$ in
\eqref{eq:premove}, then in \eqref{eq:stage2} after $(i_j\, i_{j+1})$ has been
moved, the transpositions $(a_1\, i_j), \ldots,  (a_t\, i_j)$ are the
only transpositions to the left of the newly positioned $(i_{j+1}\, i_{j})$
that contain the symbol $i_j$. Thus these transpositions can be identified, and this procedure can be reversed. 

We repeat this process on the remaining transpositions of the form $(i_j\,
i_{j+1})$ in string 1' until they have each been moved to the right of all transpositions of the form 
$(a\, i_{j+1})$, where $a$ is less than $i_{j+1}$ in both $\prec$
and $\precj$.  
This ends Stage 2 of the process and completes the description of $\Lambda_j$.

Stage 2 is reversible because of the following reasoning.  We find the leftmost
amongst all the transpositions of the form $(i_{j+1}\, i_j)$ in
$\precj$.  Some factors, though potentially none, to its left are of the form
$(a\, i_j)$, where $a \prec i_j$ (thus also $a \prec i_{j+1}$).  We take all these transpositions and reverse the procedure (looking
at \eqref{eq:stage2} as a guide).  We then continue with the new leftmost factor
$(i_{j+1}\, i_{j})$ and repeat.

The final output at the end of Stage 2 is clearly a factorisation in $\sethurprec{\omega}{\precj}$.
The brief arguments above explain how Stage 1 and Stage 2 are both reversible.
It follows that $\Lambda_j$ is invertible, so $\Lambda_j$ is bijective.
\end{proof}

We state two important corollaries of Theorem \ref{thm:bijectioninc}.

\begin{corollary}\label{cor:moncentral}  Let $\prec$ be an ordering on $[n]$.
	Then for each nonnegative integer $g$, 
	there is a bijection $\Lambda^\prec : \sethurprec{\omega}{\prec}
	\rightarrow \sethur{\omega}$.
\end{corollary}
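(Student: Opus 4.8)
The plan is to deduce Corollary~\ref{cor:moncentral} from Theorem~\ref{thm:bijectioninc} by composing the elementary transposition bijections $\Lambda_j$ along a path of orderings connecting $\prec$ to the natural order $<$. First I would recall that the symmetric group $\Sym{n}$ acts on the set of total orderings of $[n]$, and that the adjacent transpositions $s_1, \ldots, s_{n-1}$ generate $\Sym{n}$; concretely, given $\prec$, there is a sequence of adjacent swaps (a reduced word, or indeed any word, for the permutation carrying $\prec$ to $<$) producing a chain of orderings $\prec = \prec^{(0)}, \prec^{(1)}, \ldots, \prec^{(N)} = {<}$, where each $\prec^{(k+1)}$ is obtained from $\prec^{(k)}$ by swapping two adjacent elements in position $j_k$ and position $j_k+1$.

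Next I would invoke Theorem~\ref{thm:bijectioninc} at each step: for each $k$ it provides a bijection $\Lambda_{j_k} : \sethurprec{\omega}{\prec^{(k)}} \to \sethurprec{\omega}{\prec^{(k+1)}}$ (after identifying $\prec^{(k)}$ with the ``$\prec$'' of that theorem's statement and $\prec^{(k+1)}$ with ``$\precj$''). Composing these, $\Lambda^\prec := \Lambda_{j_{N-1}} \circ \cdots \circ \Lambda_{j_1} \circ \Lambda_{j_0}$ is a bijection $\sethurprec{\omega}{\prec} \to \sethurprec{\omega}{<} = \sethur{\omega}$, the last equality holding by the definition of $\sethur{\omega}$ as $\sethurprec{\omega}{<}$. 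Since a composition of bijections is a bijection, this proves the corollary. One should note that the genus $g$ is fixed throughout and each $\Lambda_{j}$ preserves it (it only applies Hurwitz moves and the Stage~2 reshuffling, both of which preserve the length of the factorisation, hence $g$), so the resulting $\Lambda^\prec$ respects the grading by $g$.

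I do not expect any serious obstacle here; the only point requiring a word of care is that Theorem~\ref{thm:bijectioninc} is stated for swapping the elements in positions $j$ and $j+1$ of a \emph{given} ordering $\prec$, and I must make sure the chain of orderings I choose realises each successive pair of orderings as exactly such an adjacent-position swap. This is immediate: any path in the (weak or strong) permutahedron from $\prec$ to $<$ consists of edges of precisely this form, and such a path always exists because adjacent transpositions generate $\Sym{n}$. (If one wishes to be fully explicit, bubble-sort: repeatedly swap an adjacent out-of-order pair until $\prec$ becomes $<$; this terminates in at most $\binom{n}{2}$ steps.) Hence the mild ``hard part'' is purely bookkeeping — choosing and naming the intermediate orderings — and the substance of the result is entirely contained in Theorem~\ref{thm:bijectioninc}.
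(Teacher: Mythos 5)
Your proposal is correct and is essentially the paper's own argument: the paper likewise writes the permutation defined by $\prec$ as a product of simple reflections and composes the corresponding bijections $\Lambda_j$ from Theorem~\ref{thm:bijectioninc} to obtain $\Lambda^\prec$. Your bubble-sort bookkeeping and the remark that each $\Lambda_j$ preserves the genus are fine but add nothing beyond what the paper's proof already contains.
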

\begin{proof}
	It is well known that any permutation can be written as a product of
	simple reflections (transpositions of the form $(j\, j+1)$).  Fix some such
	product for the permutation defined by
	$\prec$.  Then, for each transposition $(j\, j+1)$ in the product,
	determine the bijection $\Lambda_j$ and compose them all in the order
	of the product.  This is a bijection 
	from $\sethurprec{\omega}{\prec}$ to $\sethur{\omega}$.
\end{proof}
We now give a bijection showing monotone Hurwitz factorisations are central.
\begin{corollary}\label{cor:order}
	Let $\lambda \vdash n$ and suppose that $\omega, \gamma \in C_\lambda$.
	Then for each nonnegative integer $g$, there exists a bijection $\Delta : \sethur{\omega} \rightarrow \sethur{\gamma}$.
\end{corollary}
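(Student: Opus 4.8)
The plan is to reduce to Corollary \ref{cor:moncentral} by a single conjugation. First I would choose a permutation $\pi \in \Sym{n}$ with $\gamma = \pi \omega \pi^{-1}$; such a $\pi$ exists precisely because $\omega$ and $\gamma$ lie in the common conjugacy class $C_\lambda$. Conjugating a factorisation by $\pi$ turns $(a_1\,b_1)(a_2\,b_2)\cdots(a_m\,b_m) = \omega$ into $(\pi(a_1)\,\pi(b_1))(\pi(a_2)\,\pi(b_2))\cdots(\pi(a_m)\,\pi(b_m)) = \gamma$, a factorisation of $\gamma$ into the same number $m$ of transpositions. Since conjugate permutations have the same number of cycles, $c(\gamma) = c(\omega)$, so the length condition H1' is preserved with the same genus $g$.

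Next I would deal with monotonicity. The conjugated factorisation need not be monotone for the natural order $<$, but it is monotone for the order $\prec$ on $[n]$ defined by $x \prec y \iff \pi^{-1}(x) < \pi^{-1}(y)$: indeed $a_i < b_i$ forces $\pi(a_i) \prec \pi(b_i)$, and $b_k \le b_{k+1}$ forces $\pi(b_k) \preceq \pi(b_{k+1})$. Hence conjugation by $\pi$ defines a map
$$\phi : \sethur{\omega} = \sethurprec{\omega}{<} \longrightarrow \sethurprec{\gamma}{\prec},$$
and it is a bijection, with inverse given by conjugation by $\pi^{-1}$ (which transports $\prec$ back to $<$ and the product $\gamma$ back to $\omega$, and again preserves $m$).

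Finally I would apply Corollary \ref{cor:moncentral} to $\gamma$ and the order $\prec$ to obtain a bijection $\Lambda^\prec : \sethurprec{\gamma}{\prec} \rightarrow \sethur{\gamma}$, and set $\Delta := \Lambda^\prec \circ \phi$. As a composition of bijections this is a bijection $\sethur{\omega} \rightarrow \sethur{\gamma}$, which is exactly what is required. I do not expect any genuine obstacle in this corollary: the substantive work is already contained in Theorem \ref{thm:bijectioninc} (the Hurwitz-move argument that changes a single adjacent pair in the defining order) and its consequence Corollary \ref{cor:moncentral}; here the only thing needing care is the bookkeeping, namely that conjugation by $\pi$ transports the defining order correctly and that the length condition H1' depends on $\omega$ only through its cycle type.
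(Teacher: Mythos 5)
Your proposal is correct and is essentially the paper's own proof: conjugate each transposition to transport $\sethur{\omega}$ onto monotone factorisations of $\gamma$ with respect to the order induced by the conjugating permutation, then compose with the bijection $\Lambda^\prec$ of Corollary \ref{cor:moncentral}. The only difference is a harmless convention issue (the paper multiplies left to right, so its conjugation relabels symbols by $\delta^{-1}$ rather than $\pi$), which just swaps the roles of $\pi$ and $\pi^{-1}$ in your bookkeeping.
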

\begin{proof}
	Since $\omega$ and $\gamma$ are conjugate, there is a $\delta \in
	\Sym{n}$ such that $\gamma = \delta \omega \delta^{-1}$.  Let $\prec$ be the ordering
	$\delta^{-1}(1) \prec \delta^{-1}(2) \prec \cdots \prec \delta^{-1}(n)$.  Then the
	function $\Phi$ that takes an $f \in \sethur{\omega}$ to the
	factorisation $f'$, where $f'$ is obtained from $f$ by replacing every
	transposition $\tau$ by the transposition $\tau^\delta$, is a bijection from $\sethur{\omega}$ to
	$\sethurprec{\gamma}{\prec}$.  However, from Corollary \ref{cor:moncentral},
	there is a bijection $\Lambda^\prec : \sethurprec{\gamma}{\prec} \rightarrow
	\sethur{\gamma}$.  Thus the composition $\Delta : = \Lambda^\prec \circ \Phi$ is the
	requisite bijection.
\end{proof}
Above we proved Corollary \ref{cor:order} (centrality of monotone Hurwitz
factorisations) from Corollary \ref{cor:moncentral},
but it is easy to show that Corollary \ref{cor:order} also implies Corollary
\ref{cor:moncentral}.  Thus the condition in Corollary \ref{cor:moncentral} is
equivalent to the centrality of monotone Hurwitz factorisations.

We now give a combinatorial proof of the centrality of monotone double Hurwitz
factorisations.
\begin{corollary}\label{cor:mondouble}
	Let $\lambda \vdash n$ and suppose $\omega, \gamma \in C_\lambda$.  
	Then for each nonnegative integer $g$, there exists a bijection $\Theta :
	\setdoublehur{\omega} \rightarrow \setdoublehur{\gamma}$.
\end{corollary}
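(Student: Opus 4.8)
The plan is to reduce the double Hurwitz case to the (already established) single Hurwitz case by stripping off the distinguished first factor $\sigma \in C_{(n)}$. First I would observe that a tuple in $\setdoublehur{\omega}$ is a pair $(\sigma, (a_1\,b_1), \ldots, (a_m\,b_m))$ with $\sigma$ an $n$-cycle, $\sigma(a_1\,b_1)\cdots(a_m\,b_m) = \omega$, monotonicity $b_1 \le \cdots \le b_m$, and $m = c(\omega) - 1 + 2g$. Rearranging, the transposition factors give a monotone factorisation of $\sigma^{-1}\omega$ of length $m = c(\omega) - 1 + 2g$; and since $\sigma$ is an $n$-cycle, $c(\sigma^{-1}\omega)$ can be anything, but crucially the length $m$ is $n - c(\sigma^{-1}\omega) + 2g'$ for an appropriate genus $g'$ precisely when the Euler-characteristic bookkeeping works out — i.e. the transposition part is an element of $\sethurgprime{\sigma^{-1}\omega}$ for $g' = g + \tfrac{1}{2}(c(\omega) + c(\sigma^{-1}\omega) - n - 1)$, which is a nonnegative integer by the usual parity/connectivity constraints once transitivity (automatic since $\sigma$ is an $n$-cycle) is taken into account.

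The key step is then to package this as follows. For each $n$-cycle $\sigma$, set $g'(\sigma) := g + \tfrac12(c(\omega) + c(\sigma^{-1}\omega) - n - 1)$; one checks this is a nonnegative integer whenever $\sethurgprime{\sigma^{-1}\omega}$ is nonempty, and I would record the bijection
$$
\setdoublehur{\omega} \;\cong\; \bigsqcup_{\sigma \in C_{(n)}} \sethurgprime[\sigma^{-1}\omega]{}, \qquad (\sigma, \underline{\tau}) \longmapsto (\sigma, \underline{\tau}),
$$
where the term indexed by $\sigma$ uses genus $g'(\sigma)$. Now fix $\delta \in \Sym{n}$ with $\gamma = \delta\omega\delta^{-1}$. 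Conjugation by $\delta$ sends an $n$-cycle $\sigma$ to the $n$-cycle $\sigma^\delta$, and sends a monotone factorisation of $\sigma^{-1}\omega$ relative to the natural order to a factorisation of $(\sigma^\delta)^{-1}\gamma$ that is monotone relative to the reordering $\prec_\delta$ given by $\delta^{-1}(1) \prec_\delta \cdots \prec_\delta \delta^{-1}(n)$ — exactly the situation of Corollary~\ref{cor:moncentral}. Note $c((\sigma^\delta)^{-1}\gamma) = c(\sigma^{-1}\omega)$ and $c(\gamma) = c(\omega)$, so the genus parameter $g'(\sigma^\delta)$ for $\gamma$ equals $g'(\sigma)$ for $\omega$; the disjoint-union decomposition is thus respected block by block.

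Putting these together, define $\Theta$ on the block indexed by $\sigma$ as the composition
$$
\sethurgprime[\sigma^{-1}\omega]{} \xrightarrow{\ \text{conj.\ by }\delta\ } \sethurgprimeprec[(\sigma^\delta)^{-1}\gamma]{\prec_\delta} \xrightarrow{\ \Lambda^{\prec_\delta}\ } \sethurgprime[(\sigma^\delta)^{-1}\gamma]{},
$$
then prepend the $n$-cycle $\sigma^\delta$; as $\sigma$ ranges over $C_{(n)}$ so does $\sigma^\delta$, so this assembles into a bijection $\Theta : \setdoublehur{\omega} \to \setdoublehur{\gamma}$. Each arrow is a bijection (the first is conjugation, visibly invertible; the second is Corollary~\ref{cor:moncentral}), so $\Theta$ is a bijection. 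The main obstacle I anticipate is purely bookkeeping rather than conceptual: one must verify cleanly that the genus shift $g \mapsto g'(\sigma)$ lands in $\mathbb{Z}_{\ge 0}$ exactly on the support of the decomposition, and that conditions H0--H2 translate correctly under the rearrangement $\sigma(a_1\,b_1)\cdots(a_m\,b_m) = \omega \iff (a_1\,b_1)\cdots(a_m\,b_m) = \sigma^{-1}\omega$ — in particular that the monotonicity condition H2 is literally condition H2' for the transposition part and is preserved/transported by conjugation as in the proof of Corollary~\ref{cor:order}. Once that is checked, the argument is a direct transplant of the single-Hurwitz proof.
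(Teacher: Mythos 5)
Your proposal is correct and is essentially the paper's own proof: conjugate the whole tuple by $\delta$, observe that the transposition part becomes a monotone factorisation of $\beta = (\sigma^\delta)^{-1}\gamma$ relative to the order $\prec$ induced by $\delta$, apply $\Lambda^{\prec}$ from Corollary~\ref{cor:moncentral}, and prepend $\sigma^\delta$. Your explicit block decomposition over $\sigma \in C_{(n)}$ and the formula for the shifted genus $g'(\sigma)$ are just a more detailed bookkeeping of what the paper states implicitly via $m = n - c(\beta) + 2g'$.
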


\begin{proof}	
	Since $\omega$ and $\gamma$ are conjugate, there exists a permutation
	$\delta$ such that $\gamma = \delta \omega \delta^{-1}$.  Let $\prec$ be the ordering given by $\delta^{-1}(1) \prec \cdots \prec \delta^{-1}(n)$.  We describe the
	requisite function $\Theta : \setdoublehur{\omega} \rightarrow \setdoublehur{\gamma}$ as
	follows.  Let $f=\sigma \tau_1 \cdots \tau_m \in \setdoublehur{\omega}$ be a
	factorisation of $\omega$.  It
	follow that $\sigma^\delta \tau_1^\delta \cdots \tau_m^\delta = \gamma$, but
	the transpositions $\tau_1^\delta, \ldots, \tau_m^\delta$ do not necessarily satisfy the
	monotone criteria.  Set $\beta: = (\sigma^\delta)^{-1} \gamma$, so
	$\tau_1^\delta \cdots \tau_m^\delta = \beta$.  Then, as in the proof of
	Corollary \ref{cor:order}, we see that $\tau_1^\delta, \ldots,
	\tau_m^\delta$ is a monotone factorisation of $\beta$ relative to $\prec$.
	The genus $g'$ of this factorisation is given by $m = n-c(\beta) + 2g'$, so $\tau_1^\delta, \ldots,
	\tau_m^\delta \in \sethurgprimeprec{\beta}{\prec}$.  Now we apply
	$\Lambda^\prec$ from Corollary \ref{cor:moncentral} to $\tau_1^\delta, \ldots, \tau_m^\delta$ to obtain a
	factorisation $\tau_1' \cdots \tau_m' \in \sethurgprime{\beta}$.
	Finally, set $\Theta(f) = \sigma^\delta \tau_1' \cdots \tau_m'$.   It is
	clear that $\Theta$ is well-defined.  We leave it to the reader to verify it
	is a bijection.
\end{proof}

We note that applying the Hurwitz moves in Cases 1-3 preserves transitivity of
factorisations.  This means that the same proof as above shows that
\emph{transitive} monotone factorisations are also central.  That is,
$T_n(h_m(\Upxi_n))$ lies in the centre of the symmetric group algebra
$Z(\GSym{n})$.  In Section \ref{sec:trans} we shall see that this centrality still holds if we replace $h_m$ with any symmetric function.

\section{A bijection between star factorisations and monotone double Hurwitz
factorisations}\label{sec:combstar}

We now give a bijection that proves Theorem \ref{thm:hurwitzrelation}.

\begin{theorem}\label{thm:bijectiongamma} Let $\omega \in \Sym{n}$.   Then for
	each nonnegative integer
	$g$, there exists a bijection $\Gamma :
	A_g(\omega) \rightarrow \setdoublehur{\omega}$.  
\end{theorem}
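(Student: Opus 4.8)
\emph{Plan.} The idea is to peel an $n$-cycle off the front of a star factorisation and turn what remains into a monotone tail. Fix $f=(a_1\,n)(a_2\,n)\cdots(a_m\,n)\in A_g(\omega)$. By transitivity in the form S2', each $s\in[n-1]$ occurs among $a_1,\dots,a_m$, so call $(a_i\,n)$ a \emph{new} factor when $a_i\notin\{a_1,\dots,a_{i-1}\}$ and a \emph{repeat} otherwise, and let $b_1,\dots,b_{n-1}$ list the new symbols in order of first appearance (and put $b_n:=n$). Tracking the cycle containing $n$ in the partial products shows that every new factor joins $n$'s current cycle with a symbol not yet in it; hence the subword of new factors, read left to right, equals $(b_1\,n)(b_2\,n)\cdots(b_{n-1}\,n)=(b_1\, b_2\, \cdots\, b_{n-1}\, n)=:\sigma\in C_{(n)}$. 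Conversely an $n$-cycle has a unique factorisation of this form, so $\sigma$ and the list $b_1\cdots b_{n-1}$ determine one another; let $\prec_\sigma$ be the order $b_1\prec_\sigma\cdots\prec_\sigma b_n$. The $n-1$ new factors leave $k:=m-(n-1)=c(\omega)-1+2g$ repeats.

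To build the tail I would commute all repeats to the right of all new factors by Hurwitz moves, always choosing the move that fixes the new factor so that only the repeat is conjugated; since Hurwitz moves preserve the product and (as observed after Corollary~\ref{cor:mondouble}) transitivity, this yields $\omega=\sigma\cdot\rho_1\cdots\rho_k$. A direct check shows that a repeat of symbol $r$ lying between the new factors $(b_j\,n)$ and $(b_{j+1}\,n)$ (or after $(b_{n-1}\,n)$, in which case set $j=n-1$) becomes exactly the transposition $(r\, b_{j+1})$, being conjugated by $(b_{j+1}\,n)$ and then commuting past every later new factor. Listing the $\rho_i$ in the original left-to-right order of the repeats then makes $(\rho_1,\dots,\rho_k)$ a factorisation of $\beta:=\sigma^{-1}\omega$ that is monotone with respect to $\prec_\sigma$, hence lies in $\sethurgprimeprec{\beta}{\prec_\sigma}$ for the genus $g'$ with $k=n-c(\beta)+2g'$. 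Applying the bijection $\Lambda^{\prec_\sigma}$ of Corollary~\ref{cor:moncentral} turns it into a tuple $(\tau_1,\dots,\tau_k)\in\sethurgprime{\beta}$ monotone in the usual order, and I set $\Gamma(f)=(\sigma,\tau_1,\dots,\tau_k)$. Since $\sigma\in C_{(n)}$, the $\tau_i$ are monotone, $\tau_1\cdots\tau_k=\sigma^{-1}\omega$, and their number is $c(\omega)-1+2g$, this is a genus-$g$ monotone double Hurwitz factorisation of $\omega$, so $\Gamma(f)\in\setdoublehur{\omega}$.

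The inverse runs the steps backwards. From $(\sigma,\tau_1,\dots,\tau_k)\in\setdoublehur{\omega}$, the unique star factorisation $(b_1\,n)\cdots(b_{n-1}\,n)$ of the $n$-cycle $\sigma$ fixes the list $b_1\cdots b_n$ and the order $\prec_\sigma$; applying $(\Lambda^{\prec_\sigma})^{-1}$ recovers a $\prec_\sigma$-monotone factorisation $(\rho_1,\dots,\rho_k)$ of $\beta=\sigma^{-1}\omega$. Every $\prec_\sigma$-monotone transposition is uniquely $(b_{i'}\, b_\ell)$ with $i'<\ell$, which we read as ``a repeat of symbol $b_{i'}$ in slot $\ell-1$''; interleaving $(b_1\,n),\dots,(b_{n-1}\,n)$ with the corresponding repeat factors $(b_{i'}\,n)$ — placing a slot-$j$ repeat between $(b_j\,n)$ and $(b_{j+1}\,n)$, keeping same-slot repeats in their listed order — yields a word of $m$ star transpositions. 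It is transitive because the new factors already supply $(i\,n)$ for every $i\in[n-1]$, and, undoing the commutations, its product is $\omega$; thus it lies in $A_g(\omega)$, and one checks this map is a two-sided inverse of $\Gamma$.

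The main obstacle is the verification underpinning the second paragraph: one must show carefully that commuting the repeats past the new factors — in whatever order the Hurwitz moves are carried out — sends a slot-$j$ repeat of symbol $r$ to precisely $(r\, b_{j+1})$, and that the resulting tuple is genuinely $\prec_\sigma$-monotone so that Corollary~\ref{cor:moncentral} applies. I expect this to be cleanest when organised as in the proof of Theorem~\ref{thm:bijectioninc}: process the repeats one at a time, slide each past a single new factor at a time, and record in each of the few cases (disjoint supports; the two transpositions share the moved symbol; they share $n$) exactly which transposition results and that the move is reversible — the disjoint-support case disposing of all new factors after the first one a given repeat meets. With this in hand, the count $n-1+k=m$ makes the genus match on the nose, and Theorem~\ref{thm:hurwitzrelation} follows as a corollary.
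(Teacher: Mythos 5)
Your proposal is correct and is essentially the paper's own argument: your ``new'' factors are the paper's underlined first appearances defining the order $\prec$, your rightward commutation of repeats (conjugating the repeat while fixing the new factor) is exactly the paper's leftward Hurwitz moves that produce the $n$-cycle $\sigma$ followed by a $\prec$-monotone tail, and both proofs then finish by applying $\Lambda^{\prec}$ from Corollary~\ref{cor:moncentral} and invert by reversing these steps. The verification you flag as the main obstacle is handled in the paper by the same observation you already give, namely that each repeat is conjugated once by the next new factor and thereafter commutes with all later ones.
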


\begin{proof}
We first describe the function $\Gamma$.   Let $f \in A_g(\omega)$ be given by 
$$f=(a_1 \, n) (a_2 \, n) \cdots (a_{n-1+k} \, n)=\omega,$$  where
$k=c(\omega)-1+2g$.  Underline the first appearance, when reading from left to
right, of each transposition $(i \, n)$ for $i<n$ in this factorisation.  Each
of these transpositions must appear at least once as the factorisation is
transitive.  This gives
\begin{equation}
\begin{aligned}\label{eq:typfact}
	f &= \underline{(i_1 \, n)} (b_1^2 \, n) \cdots (b_{j_2}^2 \, n) \underline{(i_2
	\, n)} (b_1^3 \, n) \cdots (b_{j_3}^3 \, n) \underline{(i_3 \, n)} \cdots
		\underline{(i_{n-1} \, n)}\\&\,\,\,\,\,\,\,\,\, \cdot (b_1^n \, n) \cdots (b_{j_n}^n \, n) = \omega,
\end{aligned}
\end{equation}
     which defines the ordering on $[n]$ given by 
	 \begin{equation}\label{eq:ordering}
		 i_1 \prec i_2 \prec \cdots \prec i_{n-1} \prec i_n= n.  
	\end{equation}
	We explain the notation and properties of $f$ in \eqref{eq:typfact}.
	\begin{itemize}
    \item There are $j_p$ transpositions between the underlined $(i_{p-1} \, n)$
		and underlined $(i_p \, n)$, where $j_p \geq 0$.  These transpositions are $(b_1^p \, n) (b_2^p \, n) \dots (b_{j_p}^p \, n)$.
    \item There are $j_n$ transpositions after  $\underline{(i_{n-1} \, n)}$,
		and there are no transpositions before $\underline{(i_1 \, n)}$, so  $j_1=0$.
	\item The underlined $(i_p\, n)$ occurs in position $\sum_{i=1}^p (j_i + 1)$.
    \item For each $p = 1,2,\dots,n-1$, there are no factors before
		$\underline{(i_p \, n)}$ containing $i_p$.  Whence,  for each $1 \leq
		t \leq p$ and $s = 1,2,\dots,j_t$, we have $b_{s}^t \prec
		i_p$.
\end{itemize}    

We now shuffle the underlined transpositions in the factorisation using LHM as
follows. Beginning with $p = 2$, we apply a LHM to the pair $\tau \underline{(i_p \, n)}$,
to obtain $\underline{(i_p \, n)} \tau^{(i_p\, n)}$.  Repeatedly apply LHMs to
$\underline{(i_p\, n)}$ and the transposition to its left until the transposition to the left of $\underline{(i_p \, n)}$ is
$\underline{(i_{p-1} \, n)}$.  Repeat this for $p=3, \ldots, n$.  Once 
completed, the first $n-1$ factors in the product will
be $\underline{(i_1 \, n)} \dots \underline{(i_{n-1} \, n)}$.  Also, since
Hurwitz moves preserve the product of a factorisation, the product of the
factorisation remains $\omega$.

More precisely, when we apply the LHM to the pair $(b_s^p \, n) \underline{(i_p \, n)}$, we obtain
$\underline{(i_p \, n)} (b_s^p \, n)^{(i_p\, n)} = \underline{(i_p \, n)} (b_s^p
\, i_p)$.  As noted in the fourth bullet point,
we have $b_s^p \prec i_p$, so both transpositions are written consistently with
$\prec$.  Any Hurwitz move involving $(b_s^p \, i_p)$ at a
later step of this process will be with some $\underline{(i_{p'} \, n)}$ for $p'
> p$.  Since $b_s^p \prec i_p \prec i_{p'}$, the transpositions $(b_s^p \, i_p)$
and $\underline{(i_{p'} \, n)}$ commute, so both will be unchanged by the
Hurwitz move.  This means that the transposition $(b_s^p \, i_p)$ appears in the factorisation outputted at the end of the process.

    Therefore, this process gives the factorisation
	\begin{equation}\label{eq:continue} 
        \underline{(i_1 \, n)} \dots \underline{(i_{n-1} \, n)}  (b_{1}^2  \, i_2) \dots (b_{j_2}^2 \, i_2) \dots (b_1^n  \, n) \dots (b_{j_n}^n \, n)  = \omega
    \end{equation} 
	There are $k = c(\omega) - 1 +2g$ transpositions in \eqref{eq:continue}, and all are, crucially, in monotone order
	relative to $\prec$.
	    Observe that $(i_1 \, n) \dots (i_{n-1} \, n) = (i_1\, i_2\, \cdots\, i_{n-1}
    \, n)$ and $i_n = n$, so \eqref{eq:continue} becomes 
\begin{equation}\label{eq:continue1} 
        \underline{(i_1\, i_2\, \cdots\, i_{n-1} \, i_n)}  (b_{1}^2  \,
	i_2) \dots (b_{j_2}^2 \, i_2) \dots (b_1^n  \, i_n) \dots (b_{j_n}^n \,
	i_n)  = \omega.
\end{equation} 
From \eqref{eq:continue1} the process is reversible.  That is, given
a full cycle $\sigma = (i_1\, \cdots \, i_n)$ and a factorisation $\sigma \tau_1
\cdots \tau_k$ of $\omega$, where $\tau_1, \ldots,
\tau_k$ are a sequence of transpositions ordered consistently with $i_1 \prec
\cdots \prec i_n = n$, we can reconstruct the factorisation in
\eqref{eq:continue}.   Then to each underlined $(i_j \, n)$, in the
order $j=n-1,n-2,\dots,1$, apply RHM repeatedly to $(i_{j} \, n)$ and the transposition
to its right until $(i_{j} \, n)$ lies to the left of a
transposition of the form $(a\, n)$.  This produces a star factorisation.

Call the portion of \eqref{eq:continue1} that is not underlined (the sequence of
transpositions) $h$ and the portion that is underlined (the cycle) $\sigma$.
Note that product of the transpositions of $h$ equals $\gamma := \sigma^{-1} \omega$.  While the number of
transpositions in $h$ is fixed (it is $k = c(\omega) - 1 + 2g$), the genus of $h$ as a
monotone (relative to $\prec$) factorisation of $\gamma$ is not fixed:  its genus $g'$ satisfies $k =
n-c(\gamma) + 2g'$.   Thus $h \in \sethurgprimeprec{\gamma}{\prec}$, with the ordering $\prec$ defined by the cycle $\sigma$,
\emph{i.e.} $i_1 \prec i_2 \prec \cdots \prec i_n = n$.   
The remainder of the proof uses the bijection $\Lambda^\prec$ in
Corollary \ref{cor:moncentral}.  Through
$\Lambda^\prec$ we obtain a factorisation $h':= \Lambda^\prec (h) \in
\sethurgprime{\gamma}$.  Replacing $h$ in \eqref{eq:continue1} with $h'$
produces a factorisation in $\setdoublehur{\omega}$.   This last step is reversible.  That
is, given a full cycle $\sigma$ and a sequence of transpositions $\tau_1,
\ldots, \tau_k$ such that $f=\sigma  \tau_1 \cdots \tau_k \in
\setdoublehur{\omega}$, write
$\sigma$ as $(i_1\, \cdots\, i_n)$, where $i_n = n$, and consider the ordering
$\prec$ defined by $\sigma$.  We can then
use the bijection $\Lambda^\prec$ between $\sethurgprimeprec{\gamma}{\prec}$ and
$\sethurgprime{\gamma}$ on $\tau_1 \cdots \tau_k$, where $\gamma:= \sigma^{-1} \omega$ and $g'$ is the
appropriate genus given $\gamma$ and the number of transpositions, to produce a factorisation
$f'= \sigma \tau'_1 \cdots \tau'_k = \omega$, where the transpositions are monotone with respect to
 the ordering given by $\prec$.  The factorisation $f'$ is then in the form
 \eqref{eq:continue1}.  As noted above, from \eqref{eq:continue1} we can recover
 a star factorisation.  Thus $\Gamma$ is bijective.
\end{proof}

The centrality of star factorisations now follows from \eqref{eq:centstar}.

There is another characterisation of the centrality of star factorisations.   For an $i \in
[n]$, let
$A_g^i(\omega)$ be defined the same as $A_g(\omega)$ except that the distinguished symbol that occurs in
every factor is $i$ instead of $n$.   Thus
$A_g(\omega) = A_g^n(\omega)$ by definition. Through conjugation, centrality can then be characterized as
$|A_g(\omega)| = |A_g^i(\omega)|$ for any $\omega$ and $i \in [n]$.   We call
the members of $A_g^i(\omega)$ star factorisations \emph{with root
$i$}.

The bijection $\Gamma$ can be used to construct a bijection
between $A_g(\omega)$ and $A_g^i(\omega)$ for any fixed $i$.  Let $f \in
A_g(\omega)$.  Suppose that $\Gamma(f) = \sigma  \tau_1 \cdots \tau_k \in
\setdoublehur{\omega}$.  Now write the cycle $\sigma = (i_1\, \cdots\, i_n)$
with $i_n$ equal to $i$ instead of $n$,  and consider the ordering $\prec$ given by $i_1 \prec \cdots
\prec i_n$.  Set $\gamma:= \sigma^{-1} \omega$.  Then $\tau_1 \cdots \tau_k \in
M_{g'}(\gamma)$ for some $g'$.  Now use the inverse of the bijection
$\Lambda^\prec : \sethurgprimeprec{\gamma}{\prec}$ to
$\sethurgprime{\gamma}$ to produce a factorisation $\sigma  \tau'_1
\cdots \tau'_k$ of $\omega$, where the transpositions are monotone with respect to $\prec$.
This gives us a factorisation as in \eqref{eq:continue1}, and we reverse the
steps prior to \eqref{eq:continue1} to produce a star factorisation with root
$i_n = i$.
\section{The transitivity operator on symmetric polynomials of Jucys-Murphy
elements}\label{sec:trans}

We now give a proof of Theorem \ref{thm:transsym}.
\begin{proof} [Proof of Theorem \ref{thm:transsym}]
    We fix a positive integer $n$ throughout and use the abbreviation $T: =
    T_n$.  We show that the result is true for the elementary symmetric functions
    $e_\lambda$, where $\lambda$ is some partition ($\lambda$ does not need to
	be a partition of $n$).  Since these symmetric functions form a linear basis for $\Lambda$, this proves the theorem.  Note that it is not
    enough to show the property for $e_k$ (\emph{i.e.} the one part elementary
    symmetric functions) since $T(e_k)T(e_j) \neq T(e_k e_j)$.    
We start by analysing $e_k(\Upxi_n)$.

By definition,
\begin{equation*}
	e_k(\Upxi_n) = \sum_{1 \leq i_1 < \cdots < i_k \leq n} J_{i_1} \cdots J_{i_k}.
\end{equation*}
Recall that $J_{t} = \sum_{j=1}^{t-1} (j\, t)$.  It follows that a term $\tau_1
\cdots \tau_k$ from $J_{i_1} \cdots J_{i_k}$ has the form 
\begin{equation}\label{eq:unique}
	\tau_1 \cdots \tau_k = (j_1\, i_1)(j_2\, i_2) \cdots (j_k\, i_k), 
\end{equation}
where $j_t < i_t$ and $i_t < i_{t+1}$ for all $t$;  that is, the
transpositions constitute a \emph{strictly} monotonic factorisation of its
product $\omega$, which has $n-k$ cycles.  Conversely, from Theorem \ref{thm:jm},
each $\omega \in \Sym{n}$ with $c(\omega) = n-k$ is expressed in this sum precisely once, so
it has a unique expression of the form in \eqref{eq:unique}.  Furthermore, in
this unique expression of $\omega$, since $c(\omega) =
n-k$, a standard join-cut analysis (see Section \ref{sec:recurs}) gives that no transposition in
\eqref{eq:unique} is a cut, whence every transposition in \eqref{eq:unique} is
contained in the support of some cycle of $\omega$.  Thus it follows that
$\Orb(\tau_1, \ldots, \tau_k) = \Orb(\omega)$.

Now let $m$ be a positive integer and $\lambda = (\lambda_1, \dots, \lambda_\ell) \vdash m$.  Extending
Theorem \ref{thm:jm} to $\lambda$ gives
    \begin{equation}  \label{eq:elem}
        e_\lambda(\Upxi_n) = \prod_{i=1}^\ell \sum_{\substack{\omega \\ c(\omega) = n-\lambda_i}} \omega.
    \end{equation}
    
    Therefore applying the transitivity operator to \eqref{eq:elem} gives
    \begin{align*}
       T( e_\lambda(\Upxi_n) ) = \sum_{(\omega_1, \dots, \omega_\ell)}  \omega_1
	   \dots \omega_\ell,
    \end{align*}
	where the sum is over all $(\omega_1,  \dots, \omega_\ell)$ with
    $c(\omega_i) = n - \lambda_i$ for each $i$, and where $\Orb(\omega_1, \dots,
    \omega_\ell) = \{[n]\}$.

    Now take any $\tau \in \Sym{n}$.  In order to complete the proof we must show that
	\begin{equation}\label{eq:toprove} 
        \tau T( e_\lambda(\Upxi_n) ) \tau^{-1} = T( e_\lambda(\Upxi_n) ).
    \end{equation} 
    However, for any $\omega_1 \omega_2 \dots \omega_\ell$, we have
    $$
        \tau \omega_1 \omega_2 \dots \omega_\ell \tau^{-1} = \tau \omega_1
		\tau^{-1} \tau \omega_2 \tau^{-1} \dots \tau \omega_\ell \tau^{-1}.
    $$
    Each $\tau \omega_i \tau^{-1}$ satisfies $c(\tau \omega_i \tau^{-1}) =
    c(\omega_i) = n - \lambda_i$.  Also, since conjugating by $\tau$ just
    permutes the symbols in each factor, the set $\{\omega_1, \omega_2, \dots,
	\omega_\ell\}$
    acts transitively on $[n]$ if and only if the set $\{\tau \omega_1 \tau^{-1}, \tau
	\omega_2 \tau^{-1}, \dots, \tau \omega_\ell \tau^{-1}\}$ acts transitively on
	$[n]$.  Thus each term on the left-hand side of \eqref{eq:toprove}
	corresponds to a term on the right-hand side, completing the proof.
\end{proof}

\section{Open problems}\label{sec:open}

Recall that Theorem \ref{thm:jucyscentral} also has a converse: that the algebra
generated by $f(\Upxi_n)$ over all symmetric functions $f$ is exactly
$Z(\GSym{n})$.  It is not immediately clear to us whether the algebra generated
by the transitive images of $f(\Upxi_n)$ over all symmetric functions $f$ is $\ZSym{n}$.

\begin{question}
	What is the subalgebra of $Z(\GSym{n})$ generated by $T_n(f(\Upxi_n))$ over all symmetric functions $f$?
\end{question}

Theorem \ref{thm:transpower} gives an expression for $T_n(p_k(\Upxi_n))$, and
the proof of Theorem \ref{thm:transsym} considers $T_n(e_\lambda(\Upxi_n))$.  
\begin{question}
	Are there any other symmetric functions $f$ for which $T_n(f(\Upxi_n))$ has
	a nice expression in terms of other symmetric functions evaluated at the
	Jucys-Murphy elements?
\end{question}
Goulden and Jackson \cite{goul:8} give formulae calculating the numbers
$a_g(\lambda)$ in terms of other factorisations in $\Sym{n}$, so through
Theorem \ref{thm:hurwitzrelation} these likewise give formulae for monotone
double Hurwitz numbers.  We 
work with an equivalent expression for $a_g(\lambda)$ given by F\'eray
\cite{feray4}:  if $\lambda = (\lambda_1, \lambda_2, \ldots)$, the author found
\begin{equation}\label{eq:feray}
	a_g(\lambda) = \frac{(2g + n + \ell(\lambda) -2)!}{n!} \left( \prod_i
		\lambda_i \right)
	[t^{2g}] f(t)^{n-2} \prod_i f(\lambda_i t),
\end{equation}
where $f(t) = 2t^{-1} \sinh\left(\frac{t}{2}\right)$ and $[t^{2g}]$ extracts the
coefficient of $t^{2g}$ in the series that follows.  We obtain two 
expressions for monotone double Hurwitz factorisations in the case $\lambda = (n)$
and $(1^n)$ that have combinatorial significance.

In the case $\lambda = (n)$, we obtain from \eqref{eq:feray}
	\begin{equation*}
		a_g((n)) = \frac{(2g + n -1)!}{n!} 
		[t^{2g}] \left( e^{n \frac{t}{2}} \frac{(e^{\frac{t}{2}} -
			e^{-\frac{t}{2}})^{n-2}}{t^{n-1}}  - e^{-n\frac{t}{2}}
		\frac{(e^{\frac{t}{2}} - e^{-\frac{t}{2}})^{n-2}}{t^{n-1}} \right).
	\end{equation*}		
	Let $h(t)$ be the series following $[t^{2g}]$ in the previous equation.
	Observe that $h(t)$ can be
	expressed as $h(t) = g(t) + g(-t)$, where
	\begin{equation*}
		g(t) = e^{n \frac{t}{2}} \frac{(e^{\frac{t}{2}} -
	e^{-\frac{t}{2}})^{n-2}}{t^{n-1}},
	\end{equation*}
	whence $[t^{2g}] h(t) = 2[t^{2g}] g(t)$.  Applying this
	observation, we obtain
	\begin{align}
		a_g((n)) &= \frac{(2g + n -1)!}{n!} 2[t^{2g + n -1}]  e^t (e^t
		-1)^{n-2}\notag\\ &= 
		\frac{(2g + n -1)!}{n!} 2 [t^{2g + n -1}] \frac{\mathrm{d}}{\mathrm{d} t}
		\frac{(e^t-1)^{n-1}}{n-1}\notag\\
			    &= \frac{(2g+n)!}{\binom{n}{2}} [t^{2g+n}]
			    \frac{(e^t-1)^{n-1}}{(n-1)!}\notag\\
		 &= \frac{S(2g+n, n-1)}{\binom{n}{2}}\label{for:questiona},
	\end{align}
	where $S(\cdot,\cdot)$ is the \emph{Stirling number of the second kind};  that is,
$S(2g+n,n-1)$ is the number of ways to partition a set of $2g+n$ objects into
$n-1$ nonempty parts.  The last equality \eqref{for:questiona} follows from a
well-known generating series for Stirling numbers;  see \cite[Section 3.3]{aigner}.

When $\lambda = (1^n)$,  we obtain from \eqref{eq:feray}
\begin{align} 
	a_g((1^n)) &= \frac{(2g+2n - 2)!}{n!} [t^{2g}] \left(2t^{-1} \sinh
	\left( \frac{t}{2} \right) \right)^{2n-2}\notag\\
			   &=\frac{(2g+2n - 2)!(2n-2)!}{n!} [t^{2g + 2n -2}] \frac{1}{(2n-2)!} \left(2
			   \sinh\left(\frac{t}{2}\right) \right)^{2n-2}\notag\\
			   &= (n-1)! \mathrm{Cat}(n-1) T(g+n-1, n-1)\label{for:questionb},
\end{align}
where $T(\cdot, \cdot)$ is the \emph{central factorial number} and
$\mathrm{Cat}(n) = \tfrac{1}{n+1}\binom{2n}{n}$ is the $n^\upth$ Catalan number.  The last equality
\eqref{for:questionb} follows from \cite[Exercise 5.8]{StanEC2},
where the author also explains that $T(g+n-1, n-1)$ counts the number
of ways to partition $\{1,2,\dots,g+n-1, 1',2',\dots,(g+n-1)'\}$ into $n-1$ nonempty parts so that for each part $P$, if $i$ is the least integer so that $i$ or $i'$ is in $P$, then both $i$ and $i'$ are in $P$. 

Thus from \eqref{for:questiona}, \eqref{for:questionb} and Theorem
\ref{thm:hurwitzrelation}, we obtain the following expressions for
monotone double Hurwitz factorisations:
\begin{equation}
\begin{aligned}\label{eq:formulae}
	\doublehur{(n)} &= \frac{S(2g+n, n-1)}{\binom{n}{2}}, \\
	\textnormal{ and }\,\, \doublehur{(1^n)} &= (n-1)! \mathrm{Cat}(n-1) T(g+n-1, n-1)
\end{aligned}
\end{equation}
The first formula in \eqref{eq:formulae} is new; the second is essentially
\cite[Equation (23)]{jucys_matrix_int-matsumoto-novak}.  There, the authors find
the special case $\hur{(n)}$, which is equivalent to finding
$\doublehur{(1^n)}/(n-1)!$, using techniques substantially different from ours.
The second formula in \eqref{eq:formulae}, along with a recurrence for $T(\cdot,
\cdot)$ given in \cite[Exercise 5.8]{StanEC2}, gives rise to the recurrence
\begin{equation} \label{for:questionc}
	n\cdot \doublehur{(1^n)} = n(n-1)^2 md_{g-1}((1^n)) + 2(n-1)(2n-3)
	\doublehur{(1^{n-1})}.
\end{equation}
Note the last term in \eqref{for:questionc} pertains to factorisations in
$\Sym{n-1}$.
This recurrence is reminiscent of the Harer-Zagier recurrence for one face maps,
and suggests a simple combinatorial interpretation of this type of monotone
double Hurwitz factorisations may exist.  Finally, Goulden and Jackson \cite{goul:8} give a
simple relation between the number of transitive star factorisations and a
special case of the double Hurwitz factorisations.  We use the notation of
Goulden and Jackson, where they set $b_g(\alpha) := \tfrac{|H^g_{(n),
\alpha}|}{|C_\alpha|}$ for $\alpha \vdash n$, where $H^g_{(n), \alpha}$ is
defined in Section \ref{sec:connect}.
Then
\begin{equation} \label{for:questiond}
	\begin{aligned}
		b_g(\alpha \cup 1^{n-1})&= n! (2n-1)^{n + \ell(\alpha) + 2g - 3}
	a_g(\alpha)\\
									 &= n! (2n-1)^{n + \ell(\alpha) + 2g - 3}
	\doublehur{\alpha}.
\end{aligned}
\end{equation}
The partition $\alpha \cup 1^{n-1}$ has all the parts of $\alpha$ with an
additional $n-1$ parts equal to 1;  thus
$b_g(\alpha \cup 1^{n-1})$ counts factorisations in $\Sym{2n-1}$.
The first equality is from \cite[Corollary 1.4]{goul:8},
while the second follows from Theorem \ref{thm:hurwitzrelation}, so is new. 
In fact, Goulden and Jackson suggested proving the
first equation of \eqref{for:questiond} combinatorially to prove the centrality
of star factorisations.  We did not use their suggested method, so finding a
combinatorial proof for the first equation in \eqref{for:questiond} remains
open.
Intriguingly, the last equality connects double Hurwitz factorisations in
$\Sym{2n-1}$ with monotone double Hurwitz factorisations in $\Sym{n}$.
\begin{question}
	Are there  combinatorial explanations of formulae \eqref{for:questiona},
	\eqref{for:questionb}, \eqref{eq:formulae} or \eqref{for:questionc}?  Are there combinatorial 
	explanations connecting the left-hand side of \eqref{for:questiond} with either right-hand
	side?
\end{question}

\bibliographystyle{alpha}
\bibliography{star_factorisation_symmetry}

\end{document}